\documentclass[12pt]{amsart}
\frenchspacing
\usepackage{amsthm}
\usepackage{amsmath}
\usepackage[margin=1.2in]{geometry}
\usepackage{amssymb}
\usepackage{verbatim}
\usepackage{cleveref}
\usepackage{longtable,enumitem,color}
\usepackage{stmaryrd}
\usepackage{graphicx}
\usepackage{url}
\usepackage{amscd}
\usepackage[all]{xy}
\usepackage{enumitem}
	{\providecommand{\noopsort}[1]{} %year = unpublished manuscript\setbox0=\hbox{2003}
\usepackage{chngcntr}\counterwithin{table}{section}

\numberwithin{figure}{section}

%%%%% theorem, definition, remark, and equation environments %%%%%
\theoremstyle{plain}
\newtheorem{theorem}{Theorem}[section]\newtheorem*{nonumbertheorem}{Theorem}
\newtheorem{corollary}[theorem]{Corollary}
\newtheorem{lemma}[theorem]{Lemma}
\newtheorem{proposition}[theorem]{Proposition}
\newtheorem*{nonumberconjecture}{Conjecture}

%%%%%
\theoremstyle{definition}
\newtheorem{definition}[theorem]{Definition}

%%%%%
\theoremstyle{remark}
\newtheorem{remark}[theorem]{Remark}
\newtheorem{example}[theorem]{Example}
%%%%%
\numberwithin{equation}{section}

%Greek letters

%%%%% Rings, fields, etc. %%%%%
\newcommand{\Z}{\mathbb{Z}}\newcommand{\Q}{\mathbb{Q}}

%%%%% Lie groups %%%%%

%%%%% Common functions %%%%%

%
\newcommand{\of}[1]{\left(#1\right)}

\newcommand{\st}{~|~}

                                     % complex numbers
                                     % finite field
                                     % arbitrary field
                                     % quaternions
                                     % octonions
                                     % natural numbers
                                     % rational numbers
                                     % real numbers
%\newcommand{\pp}{{\mathbf{P}}}                                     % polynomials, projective space
                                    % Grassmannian
                                     % disc
%\newcommand{\s}{{\mathbb{S}}}                                      % sphere
                                     % Lie model
                                     % integers
                                    % general linear group
                                    % special linear group
                             % differential operator d
                             % differential operator D
                                 % subset
                                 % superset
                             % constant
                                 % homotopy relative ...
                           % image
                        % cokernel
                        % coimage
                      % codimension
                              % Chern-character
                            % automorphisms
                            % homomorphisms
                            % endomorphisms

%\newcommand{\L}{\mathsf{L}}

%%%%%%%%%%%%%%%%%%%%%%%%%%%%%%%%%%%%%%%%%%%%%%%%%

%Greek letters

%\newcommand{\gS}{\Sigma}

%%%%%%%%%%%%%%%%%%%%%%%%%%%%%%%%%%%%%%%%%%%%%%%%%

%complex proj. space, quat. proj. space,  Cayley plane

%\newcommand{\S}{\mathbb{S}}

%%%%%%%%%%%%%%%%%%%%%%%%%%%%%%%%%%%%%%%%%%%%%%%%%

% complex, real, integers

\renewcommand{\Z}{{\mathbb{Z}}}

%\renewcommand{\H}{{\mathbb{H}}}

%%%%%%%%%%%%%%%%%%%%%%%%%%%%%%%%%%%%%%%%%%%%%%%%%

% Lie groups

%\newcommand{\G}{\mathsf{G}}

%\renewcommand{\S}{\ensuremath{\operatorname{S}}}

%%%%%%%%%%%%%%%%%%%%%%%%%%%%%%%%%%%%%%%%%%%%%%%%%

%Lie algebras

%%%%%%%%%%%%%%%%%%%%%%%%%%%%%%%%%%%%%%%%%%%%%%%%%

% inner products, mods, brackets

\def\con#1=#2(#3){#1 \equiv #2 \bmod{#3}}

                     % Riemannian metric (left )
                    % Riemannian metric (right)

%%%%%%%%%%%%%%%%%%%%%%%%%%%%%%%%%%%%%%%%%%%%%%%%%

% arrows

%%%%%%%%%%%%%%%%%%%%%%%%%%%%%%%%%%%%%%%%%%%%%%%%%

% mathematical operators

%\newcommand{\dim}{\ensuremath{\operatorname{dim}}}

\DeclareMathOperator{\fd}{fd}			% formal dimension

%%%%%%%%%%%%%%%%%%%%%%%%%%%%%%%%%%%%%%%%%%%%%%%%%

% tilde's hat's bar's

%%%%%%%%%%%%%%%%%%%%%%%%%%%%%%%%%%%%%%%%%%%%%%%%%

% useful ones

%%%%%%%%%%%%%%%%%%%%%%%%%%%%%%%%%%%%%%%%%%%%%%%%%

% good looking + and +/-

%%%%%%%%%%%%%%%%%%%%%%%%%%%%%%%%%%%%%%%%%%%%%%%%%

% core stuff

                 % creates a biquotient

%%%%%%%%%%%%%START OF DOCUMENT INFO%%%%%%%%%%%%%%%%%%%%%%%%%%%%%%%

\title[Halperin's conjecture in formal dimensions up to $20$]{Halperin's conjecture in formal dimensions up to $20$}
\author{Lee Kennard}
\address{Department of Mathematics, Syracuse University, Syracuse, NY 13244, U.S.A.}\email{ltkennar@syr.edu}\urladdr{https://ltkennar.expressions.syr.edu/}
\author{Yantao Wu}
\address{Department of Mathematics, Syracuse University, Syracuse, NY 13244, U.S.A.}\email{ywu206@syr.edu}

%\date{\today}
\begin{document}

\begin{abstract}
A 1976 conjecture of Halperin on positively elliptic spaces was recently confirmed in formal dimensions up to $16$. In this article, we shorten the proof and extend the result up to formal dimension $20$. We work with Meier's algebraic characterization of the conjecture, so the proof is elementary in that it involves only polynomial algebras, ideals, and derivations.
\end{abstract}

\maketitle

\thispagestyle{empty}

\section*{Introduction}
We consider Artinian complete intersection algebras
	\[H^* = \Q[x_1,\ldots,x_k] / (u_1,\ldots,u_k)\]
with a grading concentrated in even degrees. Examples include the rational cohomology of positively elliptic topological spaces, so for simplicity we refer to these algebras as {\it positively elliptic algebras} (see Section \ref{sec:Preliminaries} for definitions).

Positively elliptic spaces play an important role in rational homotopy theory. In fact, they are the subject of a 1976 conjecture of Halperin that is listed as the first of seventeen open problems in \cite[Chapter 39]{FelixHalperinThomas01}. In 1982 Meier \cite{Meier82} proved that this conjecture can be reformulated algebraically as follows (see Section \ref{sec:Preliminaries}):

\begin{nonumberconjecture}[Halperin Conjecture] If $H^*$ is a positively elliptic algebra, then $H^*$ does not admit a non-trivial derivation of negative degree.
\end{nonumberconjecture}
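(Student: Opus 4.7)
The plan is to verify the conjecture under the standing hypothesis of the paper that $H^*$ has formal dimension at most $20$, by a case analysis organized by the \emph{type} of the positively elliptic algebra, meaning the multiset of generator degrees $(2a_1,\ldots,2a_k)$ together with the multiset of relation degrees $(2b_1,\ldots,2b_k)$. Since the socle of an Artinian complete intersection $\Q[x_1,\ldots,x_k]/(u_1,\ldots,u_k)$ lives in degree $2\sum_j b_j-2\sum_i a_i$, the formal-dimension bound forces $\sum_j b_j-\sum_i a_i\leq 10$, and combined with the classical inequality $a_j\leq b_j$ (after reindexing both sequences in increasing order) this cuts the admissible types down to a finite list, which I would first enumerate via a transparent combinatorial algorithm.

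For each type, a derivation $\theta$ of negative degree $-2d$ is determined by its values $\theta(x_i)=f_i$ on generators, where $\deg f_i=2a_i-2d$, so $f_i=0$ whenever $a_i<d$. Lifting $\theta$ to a derivation of the polynomial ring, the condition that it descends to $H^*$ reads $\theta(u_j)\in(u_1,\ldots,u_k)$ for every $j$ — a system of polynomial equations that is linear in the coefficients of the $f_i$ once the $u_j$ are fixed. The task per type is to show that for \emph{every} admissible choice of $u_j$ forming a regular sequence of the prescribed degrees the only solution is $f_i=0$. To keep the analysis manageable, I would (i) exploit the action of coordinate changes on the generators together with changes of basis of the ideal $(u_1,\ldots,u_k)$ to put the $u_j$ into a convenient normal form, (ii) use the Poincar\'e duality of $H^*$ (it is Artinian Gorenstein) to describe bases in each degree, and (iii) dispose of types that split as a tensor product $H^*\cong A^*\otimes B^*$ of smaller positively elliptic algebras by induction on the number of generators.

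The main obstacle lies in the new types that enter in formal dimensions $17$ through $20$: additional generators of degree $2$ or $4$ yield longer lists of high-degree relations whose parameter spaces are large and admit no tensor decomposition. For such cases the argument must hinge on choosing the normal form astutely and then, in the critical degree $2b_j-2d$, tracking which monomials in $\theta(u_j)$ cannot be absorbed by elements of $(u_1,\ldots,u_k)$, thereby forcing the coefficients of the $f_i$ to vanish one by one. A secondary but real obstacle is exhaustiveness of the enumeration — any missed type would leave a gap — so I would formalize the type enumeration as an auditable algorithm before the case analysis begins, and I would try to handle whole families of types uniformly, e.g.\ by a single normal-form argument or a general reduction, in order to realize the \emph{shortening} of the previous proof advertised by the paper.
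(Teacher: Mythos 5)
There is a genuine gap: what you have written is a strategy outline, not a proof. The entire mathematical content of the result lies in the steps you defer. For each admissible degree type the relations $u_j$ range over an infinite family of regular sequences, so the ``task per type'' is not a finite check, and your description of the critical step --- ``choosing the normal form astutely and then tracking which monomials in $\theta(u_j)$ cannot be absorbed'' --- is precisely the part that has to be invented and carried out. The paper's proof consists of tools that do exactly this work: a reduction to the non-split case, two new lemmas (the Large Relations Lemma, which bounds from below the number of high-degree relations when $\delta\colon H^4\to H^2$ has positive rank, and the Top-to-Bottom Lemma, which forces $\delta^l\colon H^{|x_k|}\to H^{|x_1|}$ to have rank at least two when $|u_k|<3|x_k|$), and bespoke ideal-theoretic contradictions for a handful of surviving exceptional types such as $(2,2,4,4;4,6,8,12)$, $(2,2,2,4,4;4,4,6,8,12)$, $(2,2,2,4,6;4,4,6,10,12)$, $(2,4,6,6;6,8,12,12)$ and $(2,2,6,6;4,8,12,12)$. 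None of this is present, even in sketch form, in your proposal, so the hard cases in formal dimensions $17$--$20$ remain entirely open under your plan.

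Two further points would need repair even at the level of the outline. First, your reduction of the split case by ``induction on the number of generators'' silently assumes that a negative-degree derivation on $H^*$ induces one on a factor; this is exactly Markl's theorem (for short exact sequences $0\to K^*\to H^*\to Q^*\to 0$, not tensor products --- the relevant splittings are generally not algebra tensor decompositions), and its proof is nontrivial, so it must be invoked or reproved rather than taken for granted. Second, your finiteness argument for the enumeration uses only $a_j\leq b_j$ together with $\sum_j b_j-\sum_i a_i\leq 10$, which does not bound $k$ or the degrees (terms $b_j-a_j$ could vanish); finiteness requires the sharper inequality $b_j\geq 2a_j$ for a pure (minimal) presentation, which comes from the Friedlander--Halperin characterization of degree types and is established in the paper's Theorem \ref{thm:PureModel}.
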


Evidence for this conjecture includes proofs under geometric assumptions such as when $H^*$ is the rational cohomology algebra of a K\"ahler manifold or homogeneous space (see \cite{Blanchard56,Meier83,ShigaTezuka87}). It has also been verified under algebraic assumptions such when $H^*$ is reduced (see \cite{PapadimaPaunescu96}), has at most three generators (see \cite{Lupton90,Chen99}), has relations of large degree (see \cite{ChenYauZuo19}), or has formal dimension at most $16$ (see \cite{AmannKennard20}). In this article, we expand on the latter result by shortening the proof and extending it as follows:

\begin{nonumbertheorem}
Halperin's conjecture holds in formal dimensions at most $20$.
\end{nonumbertheorem}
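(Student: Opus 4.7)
The plan is to argue by contradiction within Meier's algebraic framework. Assume $H^* = \Q[x_1,\ldots,x_k]/(u_1,\ldots,u_k)$ is a positively elliptic algebra of formal dimension at most $20$ and that $\theta$ is a nontrivial derivation of $H^*$ of some negative degree $-d$. Lifted to the polynomial ring, $\theta$ is encoded by the tuple $(\theta(x_1),\ldots,\theta(x_k))$ with $|\theta(x_i)| = |x_i| - d$, subject to the constraint $\theta(u_j) \in (u_1,\ldots,u_k)$ for every $j$. The problem thus splits into finitely many cases, indexed by the \emph{degree type} $(|x_1|,\ldots,|x_k|;|u_1|,\ldots,|u_k|)$ of the algebra, subject to the formal-dimension bound $\sum_i(|u_i|-|x_i|) \leq 20$ and the positivity and minimality conventions imposed by positive ellipticity.

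The first step is to enumerate these degree types and, since the conjecture is already known through dimension $16$, to isolate the genuinely new types occurring in formal dimensions $18$ and $20$. I would then group these types by shape so that a common argument can be applied to several types at once. For each type the task is to rule out a nontrivial $\theta$ for every admissible $d$, and two complementary principles should do most of the work. The first is a dimension count: the candidate space for $\theta$ has dimension $\sum_i \dim H^{|x_i|-d}$, which the constraints $\theta(u_j) \in (u_1,\ldots,u_k)$ often overdetermine, so it suffices to verify that the resulting linear system has full rank. The second is a structural reduction: when a proper subset of the $x_i$ cuts out a positively elliptic subalgebra of formal dimension at most $16$, the known cases apply and the remaining extension can usually be handled by a direct computation.

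The main obstacle I anticipate is the handful of degree types with many low-degree generators — for instance several $x_i$ in degree $2$ or $4$ — where the candidate space for $\theta$ is large and the naive constraints do not suffice. For these, finer algebraic information about the ideal $(u_1,\ldots,u_k)$ becomes essential: in particular its syzygies, explicit monomial bases modulo the ideal, and the Poincar\'e duality pairing in the top degree of $H^*$. The proposed shortening of the earlier proof should come from packaging the common algebraic content of these arguments into a single lemma that treats all degree types of a common reduced shape simultaneously, rather than case by case; this compression is what I expect to enable the extension from dimension $16$ to dimension $20$ without a blow-up in the length of the argument.
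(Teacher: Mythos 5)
Your proposal is a plan rather than a proof, and its two load-bearing steps have genuine gaps. First, the ``dimension count'' step --- verify that the linear system imposed by $\theta(u_j) \in (u_1,\ldots,u_k)$ has full rank for each degree type --- is not a finite check: a single degree type contains infinitely many pairwise non-isomorphic algebras (already $(2,2;4,4)$ is realized by an infinite family, as in Example \ref{exa:2gen}), so the rank of your system depends on the coefficients of the $u_j$, and what is needed is an argument valid uniformly over all regular sequences of the given degrees. That is exactly the content you defer to ``syzygies, monomial bases, and the Poincar\'e duality pairing'' without supplying it; in the paper this role is played by two concrete new results, the Large Relations Lemma (forcing relations of degree $\geq 10$ or $12$ when $\delta\co H^4\to H^2$ has positive rank) and the Top-to-Bottom Lemma (forcing $\delta^l\co H^{|x_k|}\to H^{|x_1|}$ to have rank $\geq 2$ when $|u_k|<3|x_k|$), which eliminate all but a handful of degree types before any case-by-case work begins. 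Second, your ``structural reduction'' --- when some subset of generators cuts out a positively elliptic subalgebra, apply the known low-dimensional cases and handle ``the remaining extension by a direct computation'' --- silently assumes that a nonzero negative-degree derivation on $H^*$ induces one on the subalgebra or on the quotient. That implication is Markl's theorem, a nontrivial result whose proof occupies a substantial part of Section \ref{sec:degreetypes}; without it (or a substitute), the split case is simply not handled, and with it your reduction becomes the paper's induction on formal dimension.

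Beyond these two gaps, the organizing principle you propose (enumerate degree types in dimensions $18$ and $20$, group by shape) differs from the paper's, which never enumerates types wholesale: it inducts on formal dimension, disposes of split algebras by Markl's theorem, and then stratifies the non-split case by the value of $|x_{k-1}|+|x_k|$ ($\leq 8$, $=10$, $\geq 12$), using the two lemmas above together with the Degree Inequality and the Friedlander--Halperin condition to reduce to five explicit exceptional degree types, each killed by a bespoke argument. So the compression you hope for is real, but it comes from those structural lemmas about derivations and relations, not from a rank computation per type; as written, your proposal does not contain the ideas that make the dimension-$20$ case tractable.
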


The proof follows the algebraic setup of \cite{PapadimaPaunescu96, Chen99,ChenYauZuo19} and the inductive strategy in \cite{AmannKennard20} (see Sections \ref{sec:Preliminaries} and \ref{sec:degreetypes}). In addition, we prove two new lemmas, the {\it Large Relations Lemma} and the {\it Top-to-Bottom Lemma} (see Sections \ref{sec:LargeRelations} and \ref{sec:Top-to-Bottom}). These serve to efficiently rule out all cases except for two in formal dimension $20$. The proof is completed in Section \ref{sec:Proof}.

\bigskip\noindent{\bf Acknowledgements:} This first author was supported by NSF Grant DMS-2005280, and the second was supported by a grant from the Syracuse Office of Undergraduate Research and Creative Engagement (SOURCE) at Syracuse University. Both authors would like to thank Manuel Amann and Claudia Miller for helpful discussions while preparing this paper.

\bigskip\section{Preliminaries}\label{sec:Preliminaries}\medskip
Let $A = \Q[x_1,\ldots,x_k]$ denote the polynomial ring on $k$ variables. Assume moreover that each $x_i$ has a positive, even degree assigned to it denoted by $|x_i|$. This induces a {\it graded algebra} structure on $A = \bigoplus_{n\geq 0} A^n$ where the subspace $A^n$ is spanned by monomials $x_1^{a_1}\cdots x_k^{a_k}$ satisfying $a_1|x_1| + \ldots + a_k |x_k| = n$.

Next let $I = (u_1,\ldots,u_k)$ denote the ideal generated by homogeneous polynomials $u_i \in A^{|u_i|}$, where $|u_i|$ denotes the degree of $u_i$. Recall that the $u_i$ form a {\it regular sequence} if $u_1 \in A$ is non-zero and if the image of $u_i$ in $A/(u_1,\ldots,u_{i-1})$ is not a zero divisor for all $2 \leq i \leq k$. 

\begin{definition}\label{def:Q}
The quotient $H^* = \Q[x_1,\ldots,x_k]/(u_1,\ldots,u_k)$ of a graded polynomial ring on generators with even degrees by an ideal generated by a regular sequence $u_1,\ldots,u_k$ of homogeneous polynomials is called an {\it positively elliptic algebra}.
\end{definition}

\begin{remark}
Algebras presented as in Definition \ref{def:Q} are also called graded (or weighted or quasi-homogeneous) Artinian complete intersection algebras with grading concentrated in even degrees. The condition that all generators have even degree is not required to state the Halperin conjecture. However we require it here to maintain the connection to rational homotopy theory, and this is the motivation for the definition (see \cite{Lupton98, AmannKapovitch12,AmannKollross-pre}). For example, the formal dimension defined below and appearing in our theorem equals the dimension of the underlying rationally elliptic manifold $M$ when $H^*$ is the rational cohomology of $M$.
\end{remark}

Singly generated algebras of the form $H^* = \Q[x_1]/(x_1^\alpha)$ are always positively elliptic. Doubly generated algebras can be positively elliptic or not, as can be seen from the examples $\Q[x_1,x_2]/(x_1^2 - x_2^2, x_1x_2)$ or $\Q[x_1,x_2]/(x_1^2, x_1x_2)$. Note in the latter case that the image of $x_1x_2$ in $\Q[x_1,x_2]/(x_1^2)$ is a zero divisor, so the ideal is not generated by a regular sequence.

To better understand positively elliptic algebras, we review a few well known facts. First, since the $u_i$ are homogeneous, the grading on $A$ descends to $H^* = \bigoplus_{n\geq 0} H^n$. Note that $H^n = 0$ for odd $n$ since the degrees $x_i$ are even.

Second, the quotient $H^* = \Q[x_1,\ldots,x_k]/(u_1,\ldots,u_k)$ by an arbitrary sequence of homogeneous polynomials $u_i$ of positive degree has {\it finite dimension} $\dim H^* = \sum \dim H^n$ (or Krull dimension zero) if and only if the $u_i$ form a regular sequence. Therefore another way to define a positively elliptic algebra is by requiring that $\dim H^* < \infty$.

Third, $H^*$ satisfies Poincar\'e duality (see \cite[Section8]{Halperin77}). This means that there exists $n \geq 0$ such that $H^i = 0$ for $i > n$ and $H^n \cong \Q$ and that the product map $H^i \times H^{n-i} \to H^n \cong \Q$ is a non-degenerate bilinear pairing for all $0 \leq i \leq n$. The integer $n$ is called the {\it formal dimension} (or socle degree) and is denoted by $\fd H^*$ (cf. \cite{CattalaniMilivojevic20}).

Positively elliptic algebras arise as the rational cohomology algebras of rationally elliptic topological spaces $F$ with positive Euler characteristic. In the literature, such spaces are said to be $F_0$ or positively elliptic, and they were conjectured by Halperin in 1976 to satisfy the following: For a fibration with fiber $F$, the Serre spectral sequence of that fibration degenerates (see \cite[Chapter 39]{FelixHalperinThomas01}).

In 1982, Meier \cite{Meier82} proved that Halperin's conjecture can be reformulated entirely algebraically in terms of negative degree derivations as stated in the introduction. Indeed, any non-trivial differential $d_{r+1}$ in the spectral sequence induces a derivation $\delta$ on $H^*(F)$ of (negative) degree $-r$. Recall that a derivation on $H^*$ is a linear map $\delta:H^* \to H^*$ that increases degree by some (possibly non-positive) integer $|\delta| \in \Z$ and behaves on products of homogeneous elements as follows:
	\[\delta(xy) = \delta(x) y + (-1)^{|\delta||x|} x \delta(y).\]

\begin{example}\label{exa:2gen}
The graded algebra $H^* = \Q[x_1,x_2]/(x_1^2 - \lambda x_2^2, x_1x_2)$ with $|x_1| = |x_2| = 2$ and $\lambda \in \Q\setminus\{0\}$ is a positively elliptic algebra and admits a non-trivial derivation $\delta$ of degree 2. Indeed, if we define $\delta(x_1) = x_1^2$ and $\delta(x_2) = 0$ and extend the definition by linearity and the Leibniz rule, we obtain a well defined derivation on $\Q[x_1,x_2]$. In addition, $\delta(x_1^2 - \lambda x_2^2)$ and $\delta(x_1x_2)$ are in the ideal $(x_1^2 - \lambda x_2^2, x_1x_2)$, so $\delta$ descends to a non-trivial derivation on $H^*$.
\end{example}

This example also demonstrates the way in which we work with derivations on $H^*$. They correspond exactly to derivations on $\Q[x_1,\ldots,x_k]$ that map the ideal $(u_1,\ldots,u_k)$ into itself. We note again here that we use the same notation for the generators $x_i$ in $\Q[x_1,\ldots,x_k]$ and in the quotient $H^*$.

We now restate Meier's reformulation of Halperin's conjecture from the introduction here for easy reference:

\begin{nonumberconjecture}[Halperin Conjecture] Positively elliptic algebras do not admit non-trivial derivations of negative degree.
\end{nonumberconjecture}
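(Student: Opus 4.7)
The plan is to extend the known result from formal dimension $16$ to $20$ via a finite case analysis organized by degree type. Since all generators have positive even degree and $\fd H^* = \sum |u_i| - \sum |x_i|$ for a positively elliptic algebra, the condition $\fd H^* \le 20$ bounds both the number of generators $k$ and the individual degrees $|x_i|, |u_i|$, leaving only finitely many admissible tuples $(|x_1|,\ldots,|x_k|;|u_1|,\ldots,|u_k|)$. Appealing to the formal-dimension-$16$ result of \cite{AmannKennard20}, and using that Poincar\'e duality together with even-degree generators forces $\fd H^*$ to be even, only the new cases $\fd H^* \in \{18, 20\}$ need to be treated.

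First I would enumerate the admissible degree types in each of these two dimensions and discard those that cannot support a regular sequence of the prescribed degrees---for example, those in which the low-degree generators already force a power relation inconsistent with the listed $|u_i|$, or in which the dimension count fails Poincar\'e duality. Then, following the inductive framework of \cite{AmannKennard20}, I would split off tensor factors whenever possible: if some relation is a pure power $x_j^a$, then $H^* \cong \Q[x_j]/(x_j^a) \tensor H'$, every negative-degree derivation restricts to one on the smaller factor $H'$, and the inductive hypothesis handles $H'$ since $\fd H' < \fd H^*$. This reduces the problem to degree types in which every relation genuinely mixes the generators.

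For these mixed degree types the two new lemmas carry most of the load. The Large Relations Lemma should eliminate degree types in which some $|u_i|$ is large relative to the generator degrees: in that regime the target $\delta(u_i)$ lives in a graded piece where the ideal $(u_1,\ldots,u_k)$ has too little room, forcing the coefficients of $\delta$ on low-degree generators to vanish. The Top-to-Bottom Lemma should exploit the dual fact: a negative-degree derivation must annihilate the socle $H^{\fd H^*}$, and via the non-degenerate Poincar\'e pairing this information propagates downward through the Leibniz rule, killing $\delta$ on the smallest generators, after which one bootstraps upward along the generator degrees to kill $\delta$ entirely.

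The main obstacle will be the handful of degree types in $\fd H^* = 20$ that survive both general reductions; the authors indicate exactly two such recalcitrant cases. For these I would argue directly: write each $\delta(x_i)$ as a generic homogeneous polynomial of the forced degree $|x_i| + |\delta|$, impose $\delta(u_j) \in (u_1,\ldots,u_k)$ for every $j$, and reduce modulo the relations to obtain a linear system in the unknown coefficients. The delicate point is exploiting the regularity of the sequence $u_1,\ldots,u_k$---equivalently, the finite-dimensionality of $H^*$---to rule out the solutions that would otherwise persist for non-regular choices of the $u_j$. I expect this last step to be the technical heart of the proof, since here one cannot invoke a structural lemma and must finish each surviving degree type with hands-on, ideal-theoretic computation.
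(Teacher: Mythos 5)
Your high-level plan (bound the degree types, dispose of split algebras, use the two new lemmas to cut cases, finish the survivors by hand) parallels the paper, but two of its load-bearing steps are not right as you state them. First, the split reduction: in the paper a splitting is a short exact sequence $0 \to K^* \to H^* \to Q^* \to 0$, which arises whenever $u_1,\ldots,u_l$ depend only on $x_1,\ldots,x_l$ for some $0<l<k$ --- far more general than your criterion of a pure-power relation $x_j^a$ --- and the reduction requires Markl's theorem: if $H^*$ splits and carries a non-zero negative-degree derivation, then $K^*$ or $Q^*$ does. Your assertion that ``every negative-degree derivation restricts to one on the smaller factor $H'$'' is not a proof of this (a derivation on $H^*$ need not preserve the complementary factor, and its vanishing on one factor does not by itself force $\delta=0$ on $H^*$); the paper spends a full inductive argument, decomposing $\delta(y)=\sum_\alpha \xi_\alpha\,\delta_\alpha(y)$ over a monomial basis of $K^*$, to establish it. Without this you cannot pass to the non-split case, and it is precisely non-splitness that yields the Degree Inequality $|u_i|\ge |x_1|+|x_{i+1}|$ driving every dimension estimate; with only your pure-power criterion the enumeration of degree types in $\fd H^*\in\{18,20\}$ would not collapse as needed.

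Second, the two lemmas you invoke do not say what you attribute to them, and your middle step depends on those misstatements. The Large Relations Lemma runs in the opposite direction from your description: assuming $H^*$ does not split and $\delta\colon H^4\to H^2$ has rank $m\ge 1$, it \emph{forces the existence of many relations of degree at least $12$} (or $10$), which then pushes $\fd H^*$ beyond $20$; it does not eliminate degree types because some $|u_i|$ is large. The Top-to-Bottom Lemma is a rank statement: if $|u_k|<3|x_k|$ and some iterate $\delta^l\colon H^{|x_k|}\to H^{|x_1|}$ is non-zero, then it has rank at least two. It has nothing to do with $\delta$ annihilating the socle --- a negative-degree derivation need not kill $H^{\fd H^*}$, and no Poincar\'e-pairing ``propagation'' argument of the kind you sketch is available. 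Since these lemmas, correctly stated, are what reduce everything to the two exceptional degree types $(2,4,6,6;6,8,12,12)$ and $(2,2,6,6;4,8,12,12)$, your case analysis as written would not go through; and for those two survivors your ``generic coefficients plus linear algebra'' proposal leaves open exactly the hard part, which the paper settles by normal forms for the quadratic parts $p_3,p_4$ (coprimality), applying $\delta^2$ and $\delta^3$, and divisibility arguments against $u_1$ to contradict finite-dimensionality or non-splitness.
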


We close this preliminary section with two basic results (see \cite[Lemmas 11.1 and 11.3]{AmannKennard20}).

\begin{lemma}[Land in Zero Lemma]\label{lem:land}
Let $\delta$ be a derivation of negative degree on $H^*$. If $x\in H^{i}$ for some $i>0$ such that $\delta(x)\in H^0$, then $\delta(x)=0$. 
\end{lemma}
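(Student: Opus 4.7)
The strategy is to assume $\delta(x) = c \in H^0 \cong \Q$ with $c \neq 0$ and derive a contradiction by studying $\delta$ applied to powers $x^k$. Since $H^*$ is finite-dimensional and $x$ has positive degree, $x$ is nilpotent, so we can take the smallest $N \geq 1$ with $x^N = 0$. The plan is to show that $\delta(x^N) = Ncx^{N-1}$, which will force $x^{N-1} = 0$ and contradict the minimality of $N$.

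First I would establish the formula $\delta(x^k) = kcx^{k-1}$ by induction on $k$. The base case $k = 1$ is immediate. For the inductive step, applying the Leibniz rule gives
\[\delta(x^k) = \delta(x) x^{k-1} + (-1)^{|\delta|\,|x|} x\,\delta(x^{k-1}) = c x^{k-1} + x\,\delta(x^{k-1}),\]
where the sign is $+1$ because $|x| = i$ is even (all generators of $H^*$ have even degree). Plugging in the inductive hypothesis $\delta(x^{k-1}) = (k-1)cx^{k-2}$ yields $\delta(x^k) = cx^{k-1} + (k-1)cx^{k-1} = kcx^{k-1}$, completing the induction.

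Next, since $H^*$ has finite dimension (as it is a positively elliptic algebra) and $x$ lies in strictly positive degree, the powers $x, x^2, x^3, \ldots$ eventually vanish. Let $N \geq 1$ be minimal with $x^N = 0$. The case $\delta(x) = c \neq 0$ forces $x \neq 0$, so $N \geq 2$. Applying $\delta$ to the relation $x^N = 0$ and using the formula above, we obtain
\[0 = \delta(x^N) = Ncx^{N-1}.\]
Since $Nc$ is a nonzero rational number, we conclude $x^{N-1} = 0$, contradicting the minimality of $N$. Hence $c = 0$, i.e., $\delta(x) = 0$.

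There is no real obstacle here: the argument is just the observation that a non-trivial constant in the image of $\delta$ on a generator in positive degree would make $\delta$ behave like a formal derivative, which is incompatible with nilpotency in a finite-dimensional graded algebra. The only small bookkeeping point is confirming that the Koszul sign in the Leibniz rule is trivial, which follows immediately from the evenness of the grading.
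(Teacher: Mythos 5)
Your proof is correct: the Leibniz-rule computation $\delta(x^k)=kcx^{k-1}$ (with trivial Koszul sign since the grading is even) together with nilpotency of $x$ in the finite-dimensional algebra $H^*$ gives the contradiction cleanly. The paper itself gives no proof but cites \cite[Lemma 11.1]{AmannKennard20}, and your argument is exactly the standard one used there, so there is nothing further to compare.
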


\begin{lemma}[$k-1$ Lemma]\label{lem:kmo1}
If $\delta$ is a derivation of negative degree on $H^*$ such that $\delta(x_i)=0$ for $k-1$ of the $k$ generators $x_i$, then $\delta = 0$.
\end{lemma}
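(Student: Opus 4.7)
Without loss of generality assume $\delta(x_1) = \cdots = \delta(x_{k-1}) = 0$ and set $y = \delta(x_k) \in H^{|x_k|+r}$, where $r := |\delta| < 0$. My target is $y = 0$, for then $\delta$ vanishes on every algebra generator of $H^*$ and hence on all of $H^*$ by the Leibniz rule. On the polynomial ring $A = \Q[x_1,\ldots,x_k]$, linearity and the Leibniz rule give $\delta(p) = y \cdot \partial p/\partial x_k$ for every polynomial $p$, and since $\delta$ must preserve the ideal $I = (u_1,\ldots,u_k)$, this produces the key annihilation relations
\[
 y \cdot \frac{\partial u_i}{\partial x_k} \;=\; 0 \quad \text{in } H^*, \qquad i = 1, \ldots, k.
\]

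Since $|y| = |x_k|+r < |x_k|$, every homogeneous polynomial representative of $y$ has $x_k$-degree zero; equivalently, $y$ admits a representative in $\bar A := \Q[x_1,\ldots,x_{k-1}]$. If in addition $|y| = 0$, then $y \in H^0 = \Q$, and the Land in Zero Lemma applied to $y = \delta(x_k)$ forces $y = 0$. The substantive case is $|y| > 0$, which I would attack by contradiction. Assuming $y \neq 0$ in $H^*$, Poincar\'e duality supplies some $z \in H^{n-|y|}$, with $n := \fd H^*$, such that $yz$ generates the one-dimensional socle $H^n$. A classical theorem of Scheja--Storch identifies the socle generator, up to nonzero scalar, with the Jacobian $J = \det(\partial u_i/\partial x_j)$. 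Expanding $J$ along its $k$-th column and applying the annihilation relations collapses the resulting cofactor sum to $yJ = 0$; but this equation is automatic for degree reasons whenever $|y| > 0$ and therefore does not yet yield a contradiction.

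The \textbf{main obstacle} is to amplify the partial annihilations into the conclusion $y = 0$. Two strategies suggest themselves. The first is to enlarge the annihilator of $y$: manufacture additional annihilating elements by applying the Leibniz rule to well-chosen products of the $u_i$ (or polynomials in the generators) to see that many further elements must annihilate $y$ in $H^*$, with the aim of making the annihilator so large that Poincar\'e duality forces $y = 0$. The second is to induct on $k$: since $y$ is represented in $\bar A$, try to transfer the problem to a smaller positively elliptic algebra cut out by a regular sequence in $\bar A$, then apply the inductive hypothesis to a derivation vanishing on $k-2$ of the remaining $k-1$ generators. Either route requires leveraging the regular-sequence hypothesis on $u_1,\ldots,u_k$ and the Poincar\'e duality of $H^*$ in tandem, and it is exactly this combination that I expect to carry the proof.
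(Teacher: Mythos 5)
Your write-up is a plan with the decisive step missing, and the missing step is precisely the content of the lemma. What you do establish is the routine part: the reduction to showing $y=\delta(x_k)=0$, the fact that $y$ has a representative in $\Q[x_1,\ldots,x_{k-1}]$, the annihilation relations $y\cdot\overline{\partial u_i/\partial x_k}=0$ in $H^*$ (legitimate, since the lift $y\,\partial/\partial x_k$ of $\delta$ to the polynomial ring preserves the ideal), and the case $|y|=0$ via the Land in Zero Lemma. For $|y|>0$ you correctly note that the cofactor identity $yJ=0$ is vacuous, and then you only name two strategies without executing either; neither works as described. For the first: Poincar\'e duality forces $y=0$ only if $y\cdot H^{n-|y|}=0$, whereas your relations (and their iterates $\delta^j(u_i)\in(u_1,\ldots,u_k)$, which is all that repeatedly applying $\delta$ to the relations produces) say only that $y$ annihilates the ideal generated by the classes $\overline{\partial u_i/\partial x_k}$; that annihilator always contains the socle and is in general much larger, and nothing in your sketch shows it vanishes in degree $|y|$. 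Without the grading, the data ``$y\,\partial u_i/\partial x_k\in I$, $y\notin I$, $A/I$ Artinian'' offer no formal contradiction to exploit (localizing at $y$ gives nothing, since $A/I$ is supported at the origin, where $y$ vanishes), so the degree bound $|y|<|x_k|$ and finite-dimensionality must enter quantitatively, and your outline uses them only to pick a representative. For the second strategy, the inductive framework is simply not there: $H^*/(x_k)$ need not be a complete intersection, you exhibit no regular sequence of length $k-1$ in $\Q[x_1,\ldots,x_{k-1}]$, and no derivation on a smaller positively elliptic algebra is produced to which an inductive hypothesis could apply.

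For comparison: the paper itself gives no proof of this lemma; it quotes it, together with the Land in Zero Lemma, from \cite[Lemmas 11.1 and 11.3]{AmannKennard20}, so there is no in-text argument to measure yours against. To see the kind of input actually needed, work out $k=2$ with $|x_1|<|x_2|$, $\delta(x_1)=0$, $\delta(x_2)=y=c\,x_1^e\neq0$: since $\delta(u_1)$ lies in the ideal but has degree less than $|u_1|\le|u_2|$, it vanishes identically in $\Q[x_1,x_2]$, forcing $u_1=x_1^a$ (and $e<a$ because $y\notin I$); finite-dimensionality forces $u_2(0,x_2)\neq0$; and $y\,\partial u_2/\partial x_2\in I$ in degree below $|u_2|$ means it is a multiple of $x_1^a$, so $x_1^{a-e}$ divides $\partial u_2/\partial x_2$, which makes $u_2(0,x_2)$ constant in $x_2$ and hence zero, a contradiction. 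Already in this smallest case the proof runs through exactly the degree and divisibility bookkeeping that your two strategies postpone, and the general case requires a correspondingly sharper argument, not the annihilation relations alone.
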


These lemmas imply Thomas' result that the Halperin Conjecture holds when $H^*$ is generated by at most two elements (see \cite{Thomas81}). 

\bigskip\section{Degree types and splittings}\label{sec:degreetypes}\medskip

Given a positively elliptic algebra $H^* = \Q[x_1,\ldots,x_k]/(u_1,\ldots,u_k)$, the {\it degree type} of $H^*$ is the sequence of even, positive integers denoted by 
	\[(|x_1|,\ldots,|x_k|;|u_1|,\ldots,|u_k|).\] 
As Example \ref{exa:2gen} shows, the degree type $(2, 2; 4, 4)$ can be realized in infinitely many ways, even up to isomorphism. This is a general feature. Nevertheless, it is helpful to sort positively elliptic algebras according to their degree types. In this section, we summarize previous work on degree types as they relate to Halperin's conjecture. In addition, we define pure models, formal dimension, and splittings. The first basic result is the following (see \cite[Section 32]{FelixHalperinThomas01}):

\begin{theorem}[Pure model]\label{thm:PureModel} Given a non-zero positively elliptic algebra $H^*$, there exist variables $x_i$, positive and even degrees $|x_i|$, and homogeneous polynomials 
	\[u_i \in \Q^{\geq 2}[x_1,\ldots,x_k] = \mathrm{span}\{x_1^{a_1}\cdots x_k^{a_k} \st a_1 + \ldots + a_k \geq 2\}\]
such that $H^* \cong \Q[x_1,\ldots,x_k]/(u_1,\ldots,u_k)$. Moreover these choices can be made to satisfy all of the following:
	\begin{enumerate}
	\item $|x_1| \leq \ldots \leq |x_k|$.
	\item $|u_1| \leq \ldots \leq |u_k|$.
	\item $|u_i| \geq 2|x_i|$ for all $1 \leq i \leq k$.
	\end{enumerate}
In addition, the formal dimension satisfies
	\[\fd H^* = \sum_{k=1}^n \of{|u_i| - |x_i|}.\]
\end{theorem}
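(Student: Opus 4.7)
The plan is to successively refine an initial presentation of $H^*$ until it satisfies (1)--(3), and then deduce the formal dimension formula from the Poincar\'e series. By Definition \ref{def:Q}, we may start from some presentation $H^* \cong \Q[y_1,\ldots,y_m]/(v_1,\ldots,v_m)$ with each $y_j$ of positive even degree and the $v_j$'s forming a homogeneous regular sequence. The first step is to eliminate linear terms from the relations.

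Suppose some $v_j$ has a nonzero linear term $c y_i$ with $c \neq 0$. Since $v_j$ is homogeneous of degree $|y_i|$ and every $|y_\ell|$ is positive, the variable $y_i$ can only appear linearly in $v_j$, so $v_j = c y_i + w$ with $w$ independent of $y_i$. The substitution $y_i \mapsto -w/c$ defines a graded algebra isomorphism
\[
\Q[y_1,\ldots,y_m]/(v_1,\ldots,v_m) \;\cong\; \Q[y_1,\ldots,\widehat{y_i},\ldots,y_m]/(v_1',\ldots,\widehat{v_j},\ldots,v_m'),
\]
where $v_{j'}' = v_{j'}|_{y_i = -w/c}$. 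This reduces the number of generators and relations by one while preserving homogeneity and finite-dimensionality. Iterating produces a presentation $H^* \cong \Q[x_1,\ldots,x_k]/(u_1,\ldots,u_k)$ in which every $u_j$ lies in $\Q^{\geq 2}[x_1,\ldots,x_k]$; reindexing by degree then yields (1) and (2).

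For condition (3), I would argue by contradiction. Suppose $|u_i| < 2|x_i|$ for some $i$. Every monomial of degree $< 2|x_i|$ in $\Q^{\geq 2}[x_1,\ldots,x_k]$ contains a factor $x_j$ with $|x_j| < |x_i|$, which by (1) forces $j < i$. Hence $u_1,\ldots,u_i \in (x_1,\ldots,x_{i-1})$, and therefore
\[
H^*/(x_1,\ldots,x_{i-1}) \;\cong\; \Q[x_i,\ldots,x_k]/(\bar u_{i+1},\ldots,\bar u_k),
\]
a quotient of a polynomial ring in $k-i+1$ positive-degree variables by at most $k-i$ homogeneous elements. This has positive Krull dimension and hence infinite $\Q$-dimension, contradicting $\dim_\Q H^* < \infty$.

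Finally, the formal dimension formula follows from the standard Koszul-complex computation for a homogeneous regular sequence, which gives the Poincar\'e series
\[
P_{H^*}(t) \;=\; \prod_{i=1}^k \frac{1-t^{|u_i|}}{1-t^{|x_i|}}.
\]
This is a polynomial in $t$ of degree $\sum_{i=1}^k(|u_i|-|x_i|)$ with nonzero leading coefficient (each factor contributes leading term $t^{|u_i|-|x_i|}$), so $\fd H^* = \sum(|u_i|-|x_i|)$. The step I expect to require the most care is the elimination argument: one must check that after substitution the resulting presentation is again of the form in Definition \ref{def:Q}, i.e., that the new relations remain a homogeneous regular sequence of the correct length, which is where the even-degree hypothesis (ensuring $y_i$ appears only linearly in $v_j$) plays its role.
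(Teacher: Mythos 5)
Your proposal is correct, and it reaches the same pure-model statement by a partly different route. Your elimination of linear terms is essentially the paper's argument in disguise (the paper phrases it as: take $k$ minimal and use the automorphism replacing $x_i$ by $u_j$ to drop a generator, contradicting minimality), and the point you flag as delicate is handled automatically by the criterion recalled in Section \ref{sec:Preliminaries}: since your substitution produces an algebra isomorphic to $H^*$, hence finite-dimensional, with equal numbers of generators and relations, the new relations are automatically a regular sequence. The real divergences are in the remaining two steps. For condition (3), the paper simply quotes the Friedlander--Halperin characterization (Theorem \ref{thm:SAC}), whereas you argue directly: if $|u_i| < 2|x_i|$ then $u_1,\ldots,u_i \in (x_1,\ldots,x_{i-1})$, so $H^*$ surjects onto a quotient of $\Q[x_i,\ldots,x_k]$ by at most $k-i$ homogeneous elements, which has positive Krull dimension, contradicting $\dim_\Q H^* < \infty$. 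This is more elementary and is exactly the style of argument the paper uses later for the Degree Inequality (Lemma \ref{lem:degreeInequality}); the paper's choice is merely economical because Theorem \ref{thm:SAC} is needed elsewhere anyway. For the formal dimension formula, the paper cites Poincar\'e duality with the Jacobian as fundamental class (Shiga--Tezuka), while you read off the socle degree from the Koszul-complex Hilbert series $\prod_i (1-t^{|u_i|})/(1-t^{|x_i|})$; both are standard citations, and yours has the small added benefit of giving the full Betti numbers, not just the top degree.
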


Such a presentation of $H^*$ is called a {\it pure model}, and we assume from now on that our presentations of positively elliptic algebras are pure models.

\begin{proof}
By definition, there is some presentation of $H^* = \Q[x_1,\ldots,x_k]/(u_1,\ldots,u_k)$ with $k \geq 1$. We may assume that $k$ is minimal.

Clearly the $u_j$ do not have constant terms, since otherwise the ideal $I = (u_1,\ldots,u_k)$ is the entire polynomial algebra. Moreover, if some $u_j$ has a linear term equal to a multiple of $x_i$, then the automorphism of $\Q[x_1,\ldots,x_k]$ that replaces $x_i$ by $u_j$ is an isomorphism. Taking the quotient by $I$ gives rise to a presentation of $H^*$ on $k-1$ generators. This contradicts the minimality of $k$, so we have that each $u_j \in \Q^{\geq 2}[x_1,\ldots,x_k]$.

Next, we may relabel the generators and relations so that $|x_1| \leq \ldots \leq |x_k|$ and $|u_1| \leq \ldots \leq |u_k|$. The final condition that $|u_i| \geq 2|x_i|$ for all $i$ follows by the result of Friedlander and Halperin below (Theorem \ref{thm:SAC}). Indeed, this result implies that some relation (and hence $u_k$) has degree at least twice $|x_k|$, that at least two relations (and hence $u_{k-1}$ and $u_k$) have degree at least twice $|x_{k-1}|$, and so on. 

The last claim holds by the well known fact that $H^*$ satisfies Poincar\'e duality and that one choice of fundamental class is given by the Jacobian $\det\of{{\partial u_i}/{\partial x_j}}$ (see, for example, the remarks following Theorem B in \cite{ShigaTezuka87}).
\end{proof}

\begin{example}
The positively elliptic algebra $H^* = \Q[x_1,x_2]/(x_1^2 - x_2, x_2^3)$ with $|x_1| = 2$ and $|x_2| = 4$ can be more efficiently presented as $H^* \cong \Q[x]/(x^6)$. Indeed, an isomorphism is given by mapping $x_1 \mapsto x$ and $x_2 \mapsto x^2$. 
\end{example}

A consequence of Theorem \ref{thm:PureModel} is that any given formal dimension only allows for finitely many degree types. Indeed, 
	\[\fd H^* = \sum_{i=1}^k |u_i| - |x_i| \geq \sum_{i=1}^k |x_i| \geq 2k,\]
so $k \leq \frac 1 2 \fd H^*$, the possible degrees $|x_i|$ are similarly bounded, and therefore the possibilities for the $|u_i|$ are finite. 

A further restriction on the degree types is the following result due to Friedlander and Halperin (see \cite[Corollary 1.10]{FriedlanderHalperin79} or \cite[Proposition 32.9]{FelixHalperinThomas01}):

\begin{theorem}[Characterization of degree types]\label{thm:SAC} A sequence \[(A_1,\ldots,A_k; B_1,\ldots B_k)\] of positive, even integers arises as the degree type of some positively elliptic algebra if and only if the following holds: For all $1 \leq l \leq k$ and $1 \leq i_1 < \ldots < i_l \leq k$, there exist $1 \leq j_1 < \ldots < j_l \leq k$ such that $B_{j_1},\ldots,B_{j_l}$ can be expressed as linear combinations of the form $\lambda_1 A_{i_1} + \ldots + \lambda_l A_{i_l}$ with non-negative integer coefficients satisfying $\lambda_1 + \ldots + \lambda_l \geq 2$.
\end{theorem}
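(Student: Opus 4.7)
The plan is to prove the two directions separately. The forward ($\Rightarrow$) direction is an unwinding of the regular sequence hypothesis via a substitution argument, while the backward ($\Leftarrow$) direction requires an explicit construction of a regular sequence with prescribed degrees.

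For necessity, I would fix a pure model $H^* = \Q[x_1,\ldots,x_k]/(u_1,\ldots,u_k)$ with degree type $(A_1,\ldots,A_k;B_1,\ldots,B_k)$ and a subset $S = \{i_1 < \ldots < i_l\}$. The substitution $x_s \mapsto 0$ for $s \notin S$ induces a surjection
\[H^* \twoheadrightarrow H^*/(x_s : s \notin S) \cong \Q[x_{i_1},\ldots,x_{i_l}]/(\bar{u}_1,\ldots,\bar{u}_k),\]
where $\bar{u}_j$ denotes the image of $u_j$. Since $H^*$ is finite-dimensional, so is the right-hand side, and hence the ideal $(\bar{u}_1,\ldots,\bar{u}_k)$ in the $l$-variable polynomial ring is zero-dimensional. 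Cohen-Macaulayness of $\Q[x_{i_1},\ldots,x_{i_l}]$ then guarantees that some $l$-subset $\bar{u}_{j_1},\ldots,\bar{u}_{j_l}$ forms a regular sequence, so each $\bar{u}_{j_m}$ is a nonzero homogeneous polynomial of degree $B_{j_m}$ and contains some monomial term $x_{i_1}^{\lambda_1}\cdots x_{i_l}^{\lambda_l}$ satisfying $B_{j_m} = \sum_s \lambda_s A_{i_s}$. Because the pure model assumption forces every monomial in every $u_j$ (and hence in every $\bar{u}_j$) to have total degree at least $2$, we obtain $\sum_s \lambda_s \geq 2$ as required.

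For sufficiency, I would construct a regular sequence by a generic-choice argument. Applying the hypothesis with $S = \{1,\ldots,k\}$ shows that each $B_j$ admits at least one expression as a non-negative integer combination of the $A_i$ with coefficient sum $\geq 2$, so the space $V_j \subseteq \Q[x_1,\ldots,x_k]_{B_j}$ of weighted degree-$B_j$ homogeneous polynomials is nonzero. Choose $u_j \in V_j \otimes \C$ generically. Regularity of $(u_1,\ldots,u_k)$ over $\C[x_1,\ldots,x_k]$ is equivalent to the common zero locus in $\C^k$ being $\{0\}$. For any coordinate subspace cut out by setting $x_s = 0$ for $s \notin S$, the hypothesis furnishes $l$ indices $j_1,\ldots,j_l$ whose restrictions are supported on monomials in $x_{i_1},\ldots,x_{i_l}$ alone, hence are generically nonzero on that subspace. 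A stratum-by-stratum dimension count, applied with the Hall-type condition holding for each subset $S$, then forces the restricted common zero locus to have codimension $l$ inside each coordinate subspace, and stitching these together yields that the global locus is the origin.

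The principal obstacle is the sufficiency direction. The forward direction reduces cleanly to a single application of Cohen-Macaulayness after substitution, but the backward direction must exploit the combinatorial hypothesis simultaneously on every coordinate stratum to control the codimension of the common vanishing locus. Organizing this dimension-counting argument so that a single generic $(u_1,\ldots,u_k)$ satisfies the required transversality conditions on all coordinate subspaces at once, rather than one subspace at a time, is the step I expect to be the hardest, and is where the full strength of the hypothesis ranging over all $l$ and all $i_1 < \ldots < i_l$ is genuinely needed.
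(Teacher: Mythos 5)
A preliminary remark: the paper does not prove this statement at all --- it is imported from Friedlander--Halperin \cite[Corollary 1.10]{FriedlanderHalperin79} (see also \cite[Proposition 32.9]{FelixHalperinThomas01}) --- so there is no internal argument to compare yours with, and your proposal has to stand on its own. Its architecture (restriction to coordinate subspaces for necessity; generic forms plus a stratified dimension count for sufficiency) is reasonable, but two specific steps are broken or missing.

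In the necessity direction, the claim that Cohen--Macaulayness lets you select an $l$-element subset of the given generators $\bar u_1,\ldots,\bar u_k$ forming a regular sequence is false. In $\Q[x,y]$ the three elements $xy$, $x(x+y)$, $y(x+y)$ generate an ideal of height two (their common zero locus is the origin), yet no two of them form a regular sequence, since any two share a common linear factor and so cut out a line. Fortunately you do not need a regular subsequence: you only need at least $l$ of the images $\bar u_j$ to be nonzero. That follows from Krull's height theorem --- if fewer than $l$ of the $\bar u_j$ were nonzero, the ideal they generate in $\Q[x_{i_1},\ldots,x_{i_l}]$ would have height $<l$, so the quotient would have positive Krull dimension and could not be a quotient of the finite-dimensional $H^*$. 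Each nonzero $\bar u_j$ is weighted-homogeneous of degree $B_j$ in $x_{i_1},\ldots,x_{i_l}$ and (by the pure-model normalization, which is genuinely needed here: the non-minimal presentation $\Q[x_1,x_2]/(x_2-x_1^2,\,x_1x_2)$ has degree type $(2,4;4,6)$, which fails the condition) lies in $\Q^{\geq 2}$, giving the required expression of $B_j$. With this replacement the direction is fine.

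In the sufficiency direction, the dimension count you describe does not finish the proof, and you have misplaced the difficulty. On the open part of each coordinate subspace determined by $S=\{i_1<\ldots<i_l\}$, the monomial systems are base-point-free, the restriction maps from the full spaces $V_j$ are surjective, and there are only finitely many strata, so choosing one tuple $(u_1,\ldots,u_k)$ generic for all strata simultaneously is routine (a finite intersection of dense opens); that is not the hard part. The real gap is that having at least $l$ nontrivially restricting forms on an $l$-dimensional stratum only forces the common zero locus there to have dimension $\leq 0$, i.e.\ the global zero set is finite --- it does not exclude isolated nonzero common zeros, and you cannot in general produce an $(l+1)$-st nontrivial restriction (e.g.\ when $S=\{1,\ldots,k\}$). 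The missing ingredient is weighted homogeneity: the zero set is invariant under the $\C^*$-action $t\cdot x_i=t^{A_i}x_i$ with positive weights, so any nonzero common zero would sweep out a positive-dimensional orbit, contradicting finiteness; hence the zero locus is $\{0\}$, and height $k$ together with Cohen--Macaulayness of the polynomial ring gives the regular sequence. Finally, since the algebra must be defined over $\Q$, you should note that the good parameters form a nonempty Zariski-open subset of an affine space in which the rational points are dense, so the generic choice can be made in $\Q[x_1,\ldots,x_k]$.
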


To illustrate, the degree type $(A_1,A_2; B_1, B_2) = (2,4;4,10)$ does not satisfy this condition since $A_2 = 4$ does not properly divide any of the $B_j$. Similarly, the degree type $(2,2,4,4; 4,6,8,10)$ does not satisfy the condition and therefore does not arise as the degree type of a positively elliptic algebra. 

 %%%%%%%%%%%%%%%%%%%%%%%%%%%%
% -------------- Definition of SAC and SAC(_) ------------------%
%%%%%%%%%%%%%%%%%%%%%%%%%%%%
\begin{definition}\label{def:SAC}
A sequence $(A_1,\ldots,A_k; B_1, \ldots, B_k)$ as in Theorem \ref{thm:SAC} satisfies the condition $SAC(A_{i_1}, \ldots, A_{i_l})$ if there exist $B_{j_1},\ldots,B_{j_l}$ as in the theorem. 
\end{definition}

In \cite{FriedlanderHalperin79}, the condition that $SAC(A_{i_1},\ldots,A_{i_l})$ holds for all possible subsequences of indices $1 \leq i_1 < \ldots < i_l \leq k$ is called the Strong Algebraic Condition (SAC). The examples after the theorem fail the SAC(4) and the SAC(4,4), respectively, and therefore both fail the SAC.

We close this section with a discussion of positively elliptic algebras $H^*$ that decompose in a certain sense.

\begin{definition}[Split positively elliptic algebras]
A positively elliptic algebra $H^*$ {\it splits} if there exist non-zero positively elliptic algebras $K^*$ and $Q^*$ such that the sequence
	\[0 \to K^* \to H^* \to Q^* \to 0\]
is exact.
\end{definition}

An example of a splitting is if $H^*$ has a presentation as a pure model
	\[H^* \cong \Q[x_1,\ldots,x_k]/(u_1,\ldots, u_k)\]
such that, for some $0 < l < k$, the polynomials $u_1,\ldots,u_l$ only depend on $x_1,\ldots,x_l$. Indeed, in this case, all of the following are easy to verify:
	\begin{enumerate}
	\item The subalgebra $K^* = \Q[x_1,\ldots,x_l]/(u_1,\ldots,u_l)$ is a positively elliptic algebra.
	\item The quotient algebra $Q^* = H^*/K^* \cong \Q[\bar x_{l+1},\ldots,\bar x_k]/(\bar u_{l+1}, \ldots, \bar u_k)$ is a positively elliptic algebra, where the bars denote images under the projection map.
	\item The formal dimensions of $K^*$ and $Q^*$ are positive and sum to $\fd H^*$.
	\end{enumerate}
In the proof of the Halperin conjecture up to dimension $20$, we will proceed by induction over the formal dimension. In particular, the following is an important tool for dealing with the split case (see \cite[Theorem 1]{Markl90}):

\begin{theorem}[Markl's theorem]
Let $H^*$ be a positively elliptic algebra with a non-zero derivation of negative degree. If $H^*$ splits as 
	\[0 \to K^* \to H^* \to Q^* \to 0,\] 
then $K^*$ or $Q^*$ also has a non-zero derivation of negative degree.
\end{theorem}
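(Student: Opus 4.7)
The plan is to use a pure-model presentation $H^* = \Q[x_1,\ldots,x_k]/(u_1,\ldots,u_k)$ in which $u_1,\ldots,u_l$ depend only on $x_1,\ldots,x_l$, so that $K^* = \Q[x_1,\ldots,x_l]/(u_1,\ldots,u_l)$ embeds as the subalgebra generated by those variables and the quotient map $\pi\co H^* \twoheadrightarrow Q^* = H^*/K^+H^*$ corresponds to setting them to zero. Crucially, $H^*$ is a free $K^*$-module; fix lifts $\tilde q_s$ in $H^*$ of a $\Q$-basis $\{q_s\}$ of $Q^*$ with $\tilde q_1 = 1$, giving a $K^*$-module basis of $H^*$. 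Let $\delta$ be a non-zero derivation on $H^*$ of negative degree $-r$.

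First I would consider whether $\delta$ preserves the subalgebra $K^*$. If not, expand each $\delta(x_i)$ for $i \leq l$ in the $K^*$-basis as $\delta(x_i) = k_i + \sum_{s > 1} c_{i,s}\,\tilde q_s$ with $k_i, c_{i,s} \in K^*$, noting that some $c_{i,s}$ is nonzero. Reading off the $\tilde q_s$-components ($s > 1$) of the identity $\delta(u_j) = 0$ in $H^*$ for $j \leq l$, and using that $\partial u_j/\partial x_i \in K^*$ for $i \leq l$, yields $\sum_{i \leq l} (\partial u_j/\partial x_i)\,c_{i,s} = 0$ in $K^*$ for each $s > 1$. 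This is precisely the condition for the assignment $x_i \mapsto c_{i,s}$ to extend to a well-defined derivation $D_s$ on $K^*$, of negative degree $-r - |q_s|$. Choosing an $s$ with some $c_{i,s} \neq 0$ gives the desired nonzero negative-degree derivation on $K^*$.

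Otherwise $\delta$ preserves $K^*$. If the restriction $\delta|_{K^*}\co K^*\to K^*$ is nonzero, we are done. Otherwise $\delta$ vanishes on $K^*$ and is $K^*$-linear by the Leibniz rule; the Land in Zero Lemma gives $\delta(K^+) \subseteq K^+$, so $\delta$ preserves the ideal $K^+H^*$ and descends to a derivation $\delta_Q$ on $Q^*$ of degree $-r$ (well-definedness follows from pushing $\delta(u_j) \in (u_1,\ldots,u_k)$ through $\pi$, using $\delta(x_i)=0$ for $i\le l$). If $\delta_Q \neq 0$, done. The remaining subcase is $\delta \neq 0$ with $\delta(H^*) \subseteq K^+ H^*$. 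Let $p \geq 1$ be maximal with $\delta(H^*) \subseteq (K^+)^p H^*$; $K^*$-linearity of $\delta$ gives $\delta(K^+ H^*) \subseteq (K^+)^{p+1} H^*$, so the reduction $\kappa(q_s) := \delta(\tilde q_s) \bmod (K^+)^{p+1} H^*$ yields a well-defined nonzero map $\kappa\co Q^* \to ((K^+)^p/(K^+)^{p+1}) \otimes_\Q Q^*$.

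The main obstacle is this last subcase: verifying that $\kappa$ is a derivation of $Q^*$ into the $Q^*$-module $((K^+)^p/(K^+)^{p+1}) \otimes_\Q Q^*$, i.e.\ that reducing $\delta(\tilde q_s \tilde q_t) = \delta(\tilde q_s)\tilde q_t + \tilde q_s\delta(\tilde q_t)$ modulo $(K^+)^{p+1} H^*$ yields $\kappa(q_s q_t) = \kappa(q_s)\,q_t + q_s\,\kappa(q_t)$. Once this is established, pairing with any nonzero linear functional $\phi\co (K^+)^p/(K^+)^{p+1} \to \Q$ chosen to hit the image of $\kappa$ produces a nonzero derivation $(\phi \otimes 1) \circ \kappa$ on $Q^*$ of strictly negative degree, as required.
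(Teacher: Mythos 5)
Your argument is correct in outline but follows a genuinely different route from the paper. The paper proves the contrapositive: assuming neither $K^*$ nor $Q^*$ admits a nonzero negative-degree derivation, it first observes that $\delta$ automatically restricts to $K^*$ (in a sorted pure model every element of degree less than $|x_{l+1}|$ is a polynomial in $x_1,\dots,x_l$, so $\delta(x_i)\in K^*$ for $i\le l$; this makes your Case 1 vacuous in that setting, although your treatment of it via the relations $\sum_{i\le l}\overline{(\partial u_j/\partial x_i)}\,c_{i,s}=0$ in $K^*$ is a correct way to handle a presentation whose generator degrees are not sorted), hence $\delta(x_1)=\dots=\delta(x_l)=0$; it then expands $\delta(y)=\sum_\alpha\xi_\alpha\,\delta_\alpha(y)$ over a monomial basis $\{\xi_\alpha\}$ of $K^*$ and shows by induction on $\alpha$ that each component $\bar\delta_\alpha$ is a well-defined negative-degree derivation of $Q^*$, hence zero, forcing $\delta=0$. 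You instead argue directly, and in the delicate subcase ($\delta$ kills $K^*$ and induces zero on $Q^*$) you pass to the leading term of $\delta$ along the $K^+$-adic filtration. What your route buys is a single, structurally clean derivation $\kappa$ with values in $\bigl((K^+)^p/(K^+)^{p+1}\bigr)\otimes_\Q Q^*$ in place of the paper's whole family $\bar\delta_\alpha$; what it costs is the freeness of $H^*$ as a $K^*$-module, which you assert without justification. That fact is true but not free: it follows, e.g., from the Poincar\'e-series identity $P_{H^*}=P_{K^*}\cdot P_{Q^*}$ for these complete intersections together with graded Nakayama (homogeneous lifts of a basis of $Q^*$ generate $H^*$ over $K^*$, and the dimension count forces them to be a basis). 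The paper's inductive argument needs no such input.

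The ``main obstacle'' you flag is not actually an obstacle. Since $\delta$ vanishes on $K^*$ it is $K^*$-linear, and $\delta(H^*)\subseteq(K^+)^pH^*$ gives $\delta(K^+H^*)\subseteq(K^+)^{p+1}H^*$; hence $\kappa$ is the map induced on $Q^*=H^*/K^+H^*$ by $\pi_M\circ\delta$, where $\pi_M$ is the projection onto $M=(K^+)^pH^*/(K^+)^{p+1}H^*$, and in particular it does not depend on the chosen lifts $\tilde q_s$. The ideal $K^+H^*$ annihilates $M$, so $M$ is a $Q^*$-module and $\pi_M$ intertwines the $H^*$- and $Q^*$-actions. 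The Leibniz rule for $\kappa$ is then literally the image of the Leibniz rule for $\delta$: for lifts $a,b$ of $\bar a,\bar b$ one has $\kappa(\bar a\bar b)=\pi_M\bigl(\delta(a)b+a\delta(b)\bigr)=\kappa(\bar a)\cdot\bar b+\bar a\cdot\kappa(\bar b)$. Two small points remain to make the write-up airtight: take the basis of $W=(K^+)^p/(K^+)^{p+1}$ (and of $Q^*$ in Case 1) homogeneous and take $\phi$ homogeneous, so that $(\phi\otimes 1)\circ\kappa$ is homogeneous of degree $-r-d<0$, where $d>0$ is the degree on which $\phi$ is supported; and note that the maximal $p$ exists because $K^+$ is nilpotent and $\delta\neq 0$. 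With these additions your proof is complete.
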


In our situation, where the splitting of
	\[H^* \cong \Q[x_1,\ldots,x_k]/(u_1,\ldots,u_k)\]
is adapted to this choice of pure model in the sense that the polynomials $u_1,\ldots,u_l$ only depend on $x_1,\ldots,x_l$ for some $0 < l < k$, Markl's proof takes on a somewhat simpler form, so we include it here:

\begin{proof}
Assume that $H^* = \Q[x_1,\ldots,x_k]/(u_1,\ldots,u_k)$ is a pure model for a positively elliptic algebra $H^*$ with the property that \[u_1,\ldots,u_l \in \Q[x_1,\ldots,x_l]\] for some $0 < l < k$. Let
	\[K^* = \Q[x_1,\ldots,x_l]/(u_1,\ldots,u_l).\]

Let $\delta$ be a derivation of negative degree on $H^*$. We assume that both $K^*$ and $H^*/K^*$ do not admit non-zero derivations of negative degree, and we use this to prove that $\delta = 0$.

First, since the degrees of the $x_i$ are increasing, the derivation $\delta$ restricts to a derivation on $K^*$. By the assumption on $K^*$, we have
	\[\delta(x_1) = \ldots = \delta(x_l) = 0.\]

Next, fix any vector space basis $\{\xi_\alpha\}$ for $K^*$ consisting of monic polynomials $\xi_\alpha$ in the variables $x_1,\ldots,x_l$. For each $y \in H^*$, there exist polynomials $\delta_\alpha(y)$ in the variables $x_{l+1},\ldots,x_k$ such that
	\[\delta(y) = \sum_\alpha \xi_\alpha \delta_\alpha(y).\]
We claim that each of the maps
	\begin{eqnarray*}
	\bar\delta_\alpha:H^*/K^* &\to& H^*/K^*\\
				\bar y &\mapsto& \overline{\delta_\alpha(y)}
	\end{eqnarray*}
is a well defined derivation of negative degree on $H^*/K^*$ and hence is equal to zero by assumption. If this claim is true, it is straightforward to see that 
	\[\delta(x_j) = 0 \mathrm{~for~all~} l < j \leq k\]
since otherwise some $\delta_\alpha(x_j)$ is a non-zero polynomial only depending on $x_{l+1},\ldots,x_k$, which would imply that
	\[\bar\delta_\alpha(\bar x_j) = \overline{\delta_\alpha(x_j)} \neq 0\]
in $H^*/K^*$, a contradiction.

To prove the claim, we first consider the issue of being well defined. It suffices to show that $\delta_\alpha$ maps the ideal $(x_1,\ldots,x_i)$ into itself. Fix such an element in this ideal, and express it as
	\[z = \sum_{i=1}^l x_i z_i.\]
Since $\delta$ is a derivation on $H^*$ that vanishes on $x_1,\ldots,x_l$, this equation implies the following upon applying $\delta$:
	\[\sum_{\alpha} \xi_\alpha \delta_\alpha(z) 
	= \sum_{i=1}^l x_i  \sum_\alpha \xi_\alpha \delta_\alpha(z_i).\]
For any $\alpha$, define $\delta_{\alpha,i}$ to be zero if the monomial $\xi_\alpha$ has no $x_i$ term and otherwise to be $\delta_\beta$ where $\beta$ is the index such that $x_i \xi_\beta = \xi_\alpha$. Hence
	\[\delta_\alpha(z) = \sum_{i=1}^{l} \delta_{\alpha,i}(z_i).\]

We are ready to prove by induction that $\bar\delta_\alpha= 0$ for any $\alpha$. For the base case, let $\alpha_0$ denote the index corresponding to the constant polynomial $\xi_{\alpha_0} = 1$. The calculation above shows that $\bar\delta_{\alpha_0}$ maps the ideal $(x_1,\ldots,x_l)$ to zero, so $\bar \delta_{\alpha_0}$ is a well defined function on $H^*/K^*$. Similar, but simpler, calculations use the fact that $\delta$ is linear and satisfies the Leibniz rule imply that these properties also hold for $\delta_{\alpha_0}$. Since the degree of $\delta_{\alpha_0}$ equals that of $\delta$, it must vanish by our assumption on $H^*/K^*$.

For the inductive step, fix any other index $\alpha$ and assume that $\bar \delta_\beta = 0$ for all $\beta < \alpha$. The computation of $\delta_\alpha(z)$ above, together with the induction hypothesis, implies that $\delta_\alpha$ descends to a well defined function on $H^*/K^*$. Again it follows that $\bar\delta_\alpha$ is a derivation on $H^*/K^*$ with degree $|\delta| - |\xi_\alpha| < |\delta| < 0$. Hence the assumption on $H^*/K^*$ implies that $\bar\delta_\alpha = 0$. This completes the proof of the claim and hence of Markl's theorem.
\end{proof}

For the inductive proof of our main theorem, Markl's theorem implies there is nothing to do in the case where $H^*$ splits. Therefore it is useful to have conditions that imply the existence of splittings. The following is an example of this (see \cite{AmannKennard20}):

\begin{lemma}[Degree Inequality]\label{lem:degreeInequality}
Let $H^* = \Q[x_1,\ldots,x_k]/(u_1,\ldots,u_k)$ be a positively elliptic algebra that does not split. The following hold:
	\begin{enumerate}
	\item If $i < k$, then $|u_i| \geq |x_1| + |x_{i+1}|$.
	\item If $\delta(x_2) = \lambda x_1^\alpha \neq 0$ for some $\lambda \in \Q$, where $\delta$ is a derivation on $H^*$ with negative degree, then $|u_1| \geq |x_1| + |x_3|$.
	\end{enumerate}
\end{lemma}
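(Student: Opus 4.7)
My plan is to handle both parts by contrapositive, producing a splitting in case (1) and a polynomial-level contradiction in case (2).

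For part (1), I suppose $|u_i| < |x_1| + |x_{i+1}|$ for some $i < k$. The ordering $|u_1| \leq \cdots \leq |u_i|$ from the pure model ensures the same bound for each $|u_j|$ with $j \leq i$. A direct degree count then shows that no such $u_j$ can contain a monomial involving any generator $x_l$ with $l \geq i+1$: a monomial $x_l^a$ with $a = 1$ is excluded by the pure model assumption, $a \geq 2$ forces degree at least $2|x_{i+1}| \geq |x_1| + |x_{i+1}|$, and any monomial involving $x_l$ together with a second variable has degree at least $|x_1| + |x_{i+1}|$. Hence $u_1, \ldots, u_i \in \Q[x_1, \ldots, x_i]$, which by the discussion preceding Markl's theorem produces a splitting of $H^*$, contradicting the hypothesis.

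For part (2), I again argue by contradiction, assuming $|u_1| < |x_1| + |x_3|$. The same degree count as above forces $u_1 \in \Q[x_1, x_2]$, and the no-split hypothesis rules out $u_1 \in \Q[x_1]$; so the polynomial $(u_1)_{x_2}$ is nonzero. I next show $\delta(x_1) = 0$: the relation $\delta(x_2) = \lambda x_1^\alpha$ gives $|\delta| = \alpha|x_1| - |x_2|$, and the hypothesis $|\delta| < 0$ yields $|\delta(x_1)| = |x_1| + |\delta| < |x_1|$. Since every generator has degree at least $|x_1|$, there are no nonzero polynomials of degree strictly between $0$ and $|x_1|$, and the Land in Zero Lemma handles the degree-zero case, forcing $\delta(x_1) = 0$ in $H^*$.

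I can then lift $\delta$ to a derivation $\tilde\delta$ on $\Q[x_1, \ldots, x_k]$ by setting $\tilde\delta(x_1) = 0$ and $\tilde\delta(x_2) = \lambda x_1^\alpha$, with any compatible choice on the remaining generators so that $\tilde\delta$ preserves the ideal $(u_1, \ldots, u_k)$. Because $u_1 \in \Q[x_1, x_2]$, the Leibniz rule gives $\tilde\delta(u_1) = \lambda x_1^\alpha (u_1)_{x_2}$, a homogeneous polynomial of degree $|u_1| + |\delta| < |u_1|$. Since the ideal is generated by homogeneous polynomials of degree at least $|u_1|$, every nonzero homogeneous element of the ideal also has degree at least $|u_1|$, so $\tilde\delta(u_1)$ must vanish as a polynomial. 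But the three factors $\lambda$, $x_1^\alpha$, and $(u_1)_{x_2}$ are all nonzero in the integral domain $\Q[x_1, \ldots, x_k]$, a contradiction. The main obstacle I anticipate is the lifting step: one must verify that $\tilde\delta$ can be arranged to satisfy $\tilde\delta(x_1) = 0$ as a literal polynomial (not merely modulo the ideal) while still preserving the ideal, so that the degree argument on $\tilde\delta(u_1)$ applies cleanly.
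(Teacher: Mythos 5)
Your argument is correct and follows essentially the same route as the paper: degree considerations (using the pure model and the ordering of degrees) force $u_1,\ldots,u_i \in \Q[x_1,\ldots,x_i]$ in part (1) and $u_1 \in \Q[x_1,x_2]$ in part (2), and then applying $\delta$ to $u_1$ produces an element of the ideal of degree strictly less than every $|u_j|$, hence zero, so the $x_2$-dependence of $u_1$ vanishes and a splitting contradiction results. The lifting step you flag as a potential obstacle is not one: since you have shown $\delta(x_1)=0$ in $H^*$, you may take the zero polynomial (and $\lambda x_1^\alpha$ for $x_2$) as representatives, and any such choice of polynomial lifts automatically yields a derivation of $\Q[x_1,\ldots,x_k]$ preserving the ideal, by the correspondence between derivations of $H^*$ and ideal-preserving derivations of the polynomial ring noted after Example \ref{exa:2gen}.
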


\begin{proof}
The first claim is a restatement of \cite[Lemma 11.4]{AmannKennard20}. It follows easily, since $|u_i| < |x_1| + |x_{i+1}|$ for some $i$ implies that $u_j \in \Q^{\geq 2}[x_1,\ldots,x_k]$ is a polynomial in $x_1,\ldots,x_i$ for degree reasons for all $1 \leq j \leq i$. Hence $x_1,\ldots,x_i$ generate a non-trivial subalgebra, a contradiction.

The second claim is implicit in the proof of \cite[Lemma 11.5]{AmannKennard20}. Suppose that $\delta(x_2) = \lambda x_1^\alpha \neq 0$ for some $\lambda \in \Q$ and $\alpha \geq 1$. Suppose for the purpose of contradiction that $|u_1| < |x_1| + |x_3|$. As in the previous paragraph, we conclude that $u_1$ is a polynomial in $x_1$ and $x_2$. Write $u_1 = \sum_{i=0}^r p_i(x_1) x_2^i$. Since $\delta(u_1)$ is in the ideal $(u_1,\ldots,u_k)$ and at the same time has degree less than any of the $u_i$, we have $\delta(u_1) = 0$. On the other hand, $\delta(u_1) = \sum_{i=1}^r \lambda i p_i(x_1) x_1^\alpha x_2^{i-1}$, so $p_i(x_1) = 0$ for all $i \geq 1$. Hence $u_1 = p_0(x_1)$, $x_1$ generates a non-trivial subalgebra of $H^*$, and we have a contradiction.
\end{proof}

\bigskip\section{The Large Relations Lemma}\label{sec:LargeRelations}\medskip
In this section, we prove the Large Relations Lemma and use it to prove Corollary \ref{cor:8}. This is the main step in proving the Halperin Conjecture when the degrees of the two largest generators satisfy $|x_{k-1}| + |x_k| \leq 8$.

%%%%%%%%%%%%%%%%%%%%%%%%%%%
%               Large Relations Lemma                               %
%%%%%%%%%%%%%%%%%%%%%%%%%%%

\begin{lemma}[Large Relations Lemma]\label{lem:LargeRelations}
Let $H^* \cong \Q[x_1,\ldots,x_k]/(u_1,\ldots,u_k)$ be a positively elliptic algebra that does not split. Assume that $H^*$ admits a derivation $\delta$ of negative degree such that the map $\delta:H^4 \to H^2$ has rank $m \geq 1$.

Let $g_i$ denote the number of generators with degree $i$, and let $r_j$ denote the number of relations with degree $j$. The following hold:
	\begin{enumerate}
	\item If $g_6 + g_{10} + g_{14} + \ldots = 0$, then 
		\[r_{12} + r_{16} + \ldots \geq \of{k-g_2-g_4} + \max(1, m-r_4).\]
	\item If $g_6 + g_{10} + g_{14} + \ldots \geq 1$ and $\delta^2(H^6) = 0$, then 
		\[r_{10} + r_{12} + \ldots \geq \of{k-g_2-g_4} + \max(1, m-r_4).\]
	\end{enumerate}
In particular, $|u_k| \geq 12$ in the first case and $|u_{k-1}| \geq 10$ in the second.
\end{lemma}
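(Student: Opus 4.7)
The plan is to combine the Friedlander--Halperin strong algebraic condition from Theorem \ref{thm:SAC} with a direct analysis of how the derivation $\delta$ interacts with the small-degree generators and relations. Applied to the subset of generators of degree at least $6$, SAC will provide a baseline of $k^+ := k - g_2 - g_4$ large relations; the rank condition on $\delta$ will then force an additional $\max(1, m - r_4)$ such relations, yielding the stated bounds.

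First I would set up the action of $\delta$ on low degrees. By Lemma \ref{lem:land}, $\delta(x_i) = 0$ for every degree-$2$ generator, which forces $\delta$ to annihilate $\mathrm{Sym}^2(H^2) \subset H^4$. Every degree-$4$ relation has polynomial degree at least $2$ and graded degree $4$, so it is quadratic in the degree-$2$ generators and lies inside $\mathrm{Sym}^2(H^2)$. Consequently, the rank $m$ of $\delta\colon H^4 \to H^2$ equals the rank of $\delta$ on the span of the degree-$4$ generators, and in particular $m \leq g_4$. I would also record that, for any relation $u_j$ of degree at most $5$, the element $\delta(u_j) \in I$ has degree below the minimal relation degree, so $\delta(u_j) = 0$ in $A$; this is the fundamental identity on which Step 3 rests.

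Next I would invoke Theorem \ref{thm:SAC} with $S$ equal to the set of generator indices satisfying $|x_i| \geq 6$, producing $k^+$ distinct relations whose degrees are non-negative integer combinations of the degrees in $S$ with coefficient sum at least $2$. In Case 1, the assumption $g_6 = g_{10} = \cdots = 0$ forces the degrees in $S$ to lie in $\{8, 12, 16, \ldots\}$ and the baseline combinations into $\{16, 20, \ldots\}$; a companion application of SAC to $S$ enlarged by a single degree-$4$ generator pushes one of the baseline relations down to $12$, so that the full $k^+$ relations land in $r_{12} + r_{16} + \cdots$. In Case 2, since $g_6 + g_{10} + \cdots \geq 1$, the smallest relevant combination is $2 \cdot 6 = 12$, which already lies inside $r_{10} + r_{12} + \cdots$. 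The extra $\max(1, m - r_4)$ contribution I would extract from the polynomial identity $\delta(u_j) \in I$ applied to each degree-$6$ relation. For such a $u_j$, Step 1 reduces the identity to $\sum_{a,i} c^j_{ai} x_a\, \delta(x_i) \in I^4$, where $c^j_{ai}$ is the coefficient of $x_a x_i$ in $u_j$ and the indices $a, i$ run over degree-$2$ and degree-$4$ generators respectively; the resulting linear system shows that whenever $m > r_4$, at least $m - r_4$ further large relations beyond the SAC baseline are required to absorb the image of $\delta$ on the degree-$4$ generators. The residual $+1$ in the $\max$ comes from the non-splitting hypothesis combined with Lemma \ref{lem:kmo1} and Markl's theorem: absent an extra large relation, $H^*$ would decompose, contradicting the standing assumption. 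Finally, the ``in particular'' statements $|u_k| \geq 12$ and $|u_{k-1}| \geq 10$ follow at once from the main inequalities, using $k^+ \geq 1$ in Case 2.

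The hard part will be Step 3, the derivation analysis, and in particular showing sharply that $\delta$ contributes exactly $\max(1, m - r_4)$ independent relations \emph{beyond} those coming from SAC. Managing the distinction between the mod-$4$ classes of generator and relation degrees, which is what distinguishes Cases 1 and 2, and ensuring that the extra derivation-induced relations are not double-counted against the SAC baseline, is the central combinatorial delicacy.
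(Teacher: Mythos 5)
Your SAC baseline is essentially fine: applying Theorem \ref{thm:SAC} to the set of generators of degree at least $6$ does produce $k-g_2-g_4$ relations whose degrees are sums (with coefficient sum at least $2$) of those generator degrees, hence at least $16$ and divisible by $4$ in Case 1 and at least $12$ in Case 2, and these are already counted by the stated sums; your ``companion application'' of SAC with an extra degree-$4$ generator is therefore unnecessary, and as stated it is also incorrect, since enlarging the set only guarantees combinations of degree at least $8$, not a relation of degree exactly $12$. The genuine gap is the extra $\max(1,m-r_4)$ term, which is the whole content of the lemma and which you yourself flag as the ``central combinatorial delicacy'' without resolving. The mechanism you propose --- applying $\delta$ to degree-$6$ relations and reading off a linear system $\sum_{a,i}c^j_{ai}x_a\,\delta(x_i)\in I$ --- only constrains the coefficients of those degree-$6$ relations against the $r_4$ relations of degree $4$; it produces no relation of degree $\geq 10$ or $\geq 12$, and there need not be any degree-$6$ relations at all. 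Likewise Markl's theorem and Lemma \ref{lem:kmo1} do not supply the residual $+1$: Markl's theorem concerns derivations on algebras that already split, and the $+1$ has to be extracted from the relations themselves. Finally, even if you could manufacture $\max(1,m-r_4)$ additional large relations from the derivation, nothing in your setup guarantees they are distinct from the $k-g_2-g_4$ relations furnished by SAC, and without that distinctness the two contributions cannot be added.

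For contrast, the paper avoids SAC entirely and obtains both terms from a single count. Normalize $\delta$ so that $\delta(x_{g_2+i})=x_i$ for $1\leq i\leq m$ and $\delta$ vanishes on the remaining generators of degree $\leq 4$, and write each degree-$8$ relation as $u_j=p_j+r_j$ with $p_j$ quadratic in the $m$ ``active'' degree-$4$ generators and $r_j$ in the ideal $I_0$ generated by the degree-$2$ generators and the inactive degree-$4$ generators. Applying $\delta^2$ (this is where the hypothesis $\delta^2(H^6)=0$ enters in Case 2) shows that each $p_j(x_1,\ldots,x_m)$ lies in the span of the degree-$4$ relations; if $m$ of the $p_j$ were linearly independent, then $x_1,\ldots,x_m$ would generate a splitting subalgebra, contradicting the hypothesis. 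Hence after a change of basis at most $\min(m-1,r_4)$ degree-$8$ relations have $p_j\neq 0$, and the ideal consisting of $I_0$, those few degree-$8$ relations, and all relations of degree in $\{12,16,\ldots\}$ (resp. $\{10,12,\ldots\}$) contains every $u_i$. Since a finite-dimensional quotient of $\Q[x_1,\ldots,x_k]$ requires an ideal with at least $k$ generators, this yields $(g_2+g_4-m)+\min(m-1,r_4)+(r_{12}+r_{16}+\cdots)\geq k$, which is exactly the claimed inequality, with no double-counting issue to manage. To complete your proof you would need an argument of this strength in place of your Step 3, or better, replace the SAC framing by such an ideal-generation count.
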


\begin{proof}
We prove the claims simultaneously. By the Land in Zero Lemma, we may assume that $\delta(x_i) = 0$ for $1 \leq i \leq g_2$. In addition, we may change basis so that
	\[\delta(x_{g_2+i}) = \left\{\begin{array}{rcl} x_{i} &\mathrm{if}& 1 \leq i \leq m\\
					0	&\mathrm{if}&	m < i \leq g_4\end{array}\right.\]
Finally, if $x_h$ is a generator in degree six, then the condition $\delta^2(H^6) = 0$ implies that $\delta(x_h)$ has no $x_{g_2+i}$ term with $1 \leq i \leq m$.

Let $\{u_j | j \in J\}$ denote the relations with degree $8$. Write each of these as
	\[u_j = p_j(x_{g_2+1},\ldots,x_{g_2+m}) + r_j\]
where $p_j$ is a quadratic polynomial and where $r_j$ is in the ideal 
	\[I_0 = (x_1,\ldots,x_{g_2}) + (x_{g_2 + m+1}, \ldots, x_{g_2+g_4}).\]
Fix $J' \subseteq J$ such that $\{p_j \st j \in J'\}$ is a basis for the span of $\{p_j \st j \in J\}$. 

\begin{comment}
First, suppose for a moment that $m = 0$. Then $I_0$ contains all generators in degrees two and four, and hence it contains all relations in degree less than $12$ for degree reasons. Consider the ideal
	\[I = I_0 + \of{\{u_j \st |u_j| \in\{12,14,16,\ldots\}}.\]
Clearly $I$ contains all of the relations $u_i$, so there exists a projection
	\[H^* \cong \Q[x_1,\ldots,x_k]/(u_1,\ldots,u_k) \to \Q[x_1,\ldots,x_k]/I.\]
Since $H^*$ is a positively elliptic algebra and in particular finite-dimensional, $I$ has at least $k$ generators. This implies the desired bound in Claim 1.

Second, suppose that $m \geq 1$ and that $|J'| \geq m$.
\end{comment}

Suppose for a moment that $|J'| \geq m$. Since $I_0 \cap H^4 \subseteq \ker(\delta)$, $H^6 \subseteq \ker(\delta^2)$, and $|r_j| = 8$, it follows that $\delta^2(r_j) = 0$ for all $j \in J'$. Therefore
	\[\delta^2(u_j) = 2 p_j(x_1,\ldots,x_m).\]
for $j \in J'$. Now $\delta^2(u_j)$ has degree four and lies in the ideal generated by $u_1,\ldots,u_k$. Hence $p_j(x_1,\ldots,x_m)$ lies in the $r_4$-dimensional span of $\{u_i \st |u_i| = 4\}$. Since the polynomials $p_j$ with $j \in J'$ are linearly independent, we may perform a change of basis on the degree-four relations $u_i$ such that $u_1,\ldots,u_m$ are equal to the polynomials $p_j(x_1,\ldots,x_m)$ for $j \in J'$. But this implies that $x_1,\ldots,x_m$ generate a subalgebra $K^*$ that induces a splitting of $H^*$, a contradiction to the assumptions in the lemma.

\begin{comment}
Finally, assume that $m \geq 1$ and that $|J'| \leq m - 1$. 
\end{comment}
We may assume now that $|J'| \leq m - 1$. By the argument in the previous paragraph, $|J'| \leq \min(m-1, r_4)$ by choice of $J'$. We can perform a change of basis on the $u_j$ for $j \in J$ so that $p_j = 0$ for $j \in J \setminus J'$. 

To finish the proof of Claim 1, consider the ideal
	\[I = I_0 + \of{\{u_j \st j \in J'\}} + \of{\{u_j \st |u_j| \in\{12,16,\ldots\}}.\]
If a relation $u_i$ has degree less than eight or not divisible by four, then it lies in $I_0$ for degree reasons since there are no generators in degrees $6$, $10$, etc. If $|u_i| = 8$, then it lies in $I_0 + \of{\{u_j \st j \in J'\}}$ by choice of $J'$. Finally it is clear that $u_i \in I$ for all other relations $u_i$. Hence $H^*$ projects onto $\Q[x_1,\ldots,x_k]/I$. Since $H^*$ is finite-dimensional, $I$ must have at least $k$ generators. Therefore
	\[(g_2 + g_4 - m) + \min(m-1,r_4) + (r_{12} + r_{16} + \ldots) \geq k,\]
which implies the desired bound in Claim 1.

To finish the proof of Claim 2, we use a similar argument with $I$ replaced by
	\[I = I_0 + \of{\{u_j \st j \in J'\}} + \of{\{u_j \st |u_j| \in\{10,12,\ldots\}}.\]
It is clear that relations of degree four or degree eight or larger lie in $I$. Relations of degree six are also in $I_0$ and hence in $I$ because they are polynomials in $\Q^{\geq 2}[x_1,\ldots,x_k]$ (see Theorem \ref{thm:PureModel}). Hence again all relations are in $I$, and the claim follows as before.
\end{proof}

Next, we apply Lemma \ref{lem:LargeRelations} to quickly prove our main theorem when $|x_{k-1}| + |x_k| \leq 8$ in all but three exceptional cases. 

\begin{proposition}\label{pro:8} Let $H^*$ be a positively elliptic algebra that does not split. If $H^*$ admits a non-zero derivation of negative degree and $|x_{k-1}| + |x_k| \leq 8$, then either $\fd H^* > 20$ or the degree type is one of the following:
	\[(2,2,4,4; 4,6,8,12),  (2,2,4,4; 4,8,8,12),  \mathrm{~or~} (2,2,2,4,4; 4,4,6,8,12),\]
\end{proposition}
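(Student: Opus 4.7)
The plan is to combine the Land in Zero and $k-1$ Lemmas, the Degree Inequality Lemma, and the Large Relations Lemma to narrow the possible degree types, then finish by a short enumeration. Throughout I assume $H^*$ does not split and admits a non-zero derivation $\delta$ of negative (hence even) degree with $|x_{k-1}| + |x_k| \leq 8$.

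First I would pin down the form of $\delta$. Since $|x_k| \leq 6$ by hypothesis, a derivation of degree $\leq -4$ would kill every generator of degree $\leq 4$ via Lemma \ref{lem:land}, leaving at most one non-killed generator and forcing $\delta = 0$ by Lemma \ref{lem:kmo1}; so $|\delta| = -2$. Lemma \ref{lem:land} then kills all degree-$2$ generators, so if $|x_k| = 2$ then $\delta = 0$, while if $|x_k| = 6$ the hypothesis forces $|x_{k-1}| = 2$ and only $x_k$ could survive, again giving $\delta = 0$. Hence $|x_k| = 4$ and $k = g_2 + g_4$.

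Second I would establish $g_2, g_4 \geq 2$ and lower bounds on the relation degrees. Let $m = \rank(\delta|_{H^4} \colon H^4 \to H^2)$. Products of degree-$2$ generators lie in $\ker \delta$, so $m = \dim \spann\{\delta(x_\ell) : |x_\ell| = 4\} \leq \min(g_2, g_4)$. After a linear change of basis among the degree-$4$ generators, $\delta$ kills $g_4 - m$ of them, placing $g_2 + g_4 - m$ generators in the kernel; Lemma \ref{lem:kmo1} then forces $m \geq 2$, and hence $g_2, g_4 \geq 2$. Since $g_6 = g_{10} = \cdots = 0$, Claim (1) of the Large Relations Lemma gives
\[ r_{12} + r_{16} + \cdots \geq \max(1, m - r_4) \geq 1, \]
so $|u_k| \geq 12$. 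The Degree Inequality Lemma combined with $|u_i| \geq 2|x_i|$ yields $|u_i| \geq 4$ for $1 \leq i < g_2$, $|u_{g_2}| \geq 6$, and $|u_i| \geq 8$ for $g_2 < i \leq k-1$. Summing, and using $\sum |x_i| = 2g_2 + 4g_4$,
\[ \fd H^* \geq 4(g_2 - 1) + 6 + 8(k - 1 - g_2) + 12 - (2g_2 + 4g_4) = 2g_2 + 4g_4 + 6. \]
Thus $\fd H^* \leq 20$ forces $g_2 + 2g_4 \leq 7$, leaving only $(g_2, g_4) \in \{(2,2), (3,2)\}$.

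Finally I would enumerate in the two remaining cases. For $(g_2, g_4) = (3, 2)$ the lower bounds together with $\sum |u_i| \leq \fd H^* + 14 \leq 34$ force $(|u_1|, \ldots, |u_5|) = (4, 4, 6, 8, 12)$, the third listed type. For $(g_2, g_4) = (2, 2)$, the constraints force $|u_4| = 12$, and then $|u_1| + |u_2| + |u_3| \leq 20$ under $|u_1| \geq 4$, $|u_2| \geq 6$, $|u_3| \geq 8$ leaves exactly four candidates: $(4, 6, 8, 12)$ and $(4, 8, 8, 12)$ (the first two listed types), together with $(4, 6, 10, 12)$ and $(6, 6, 8, 12)$. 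The tuple $(4, 6, 10, 12)$ violates $SAC(|x_3|, |x_4|) = SAC(4, 4)$, which requires two relations in $\{8, 12, 16, \ldots\}$, while only $|u_4| = 12$ qualifies. The tuple $(6, 6, 8, 12)$ has $r_4 = 0$, so the Large Relations Lemma demands $r_{12} + r_{16} + \cdots \geq \max(1, 2) = 2$, contradicting the single degree-$12$ relation. The principal obstacle I anticipate is ensuring the final enumeration is exhaustive; the rest is a direct application of the cited lemmas.
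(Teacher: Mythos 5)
Your argument is correct and follows essentially the same route as the paper's proof: Land in Zero and the $k-1$ Lemma to force all generators into degrees $2$ and $4$ with $m\ge 2$, the Large Relations Lemma to get $r_{12}+r_{16}+\cdots\ge\max(1,m-r_4)$, the Degree Inequality plus the formal-dimension formula to pin down $(g_2,g_4)\in\{(2,2),(3,2)\}$, and a final enumeration trimmed by $SAC(4,4)$ and the refined Large Relations bound. The only (cosmetic) difference is that the paper keeps a fifth candidate $(2,2,4,4;4,6,8,14)$ in the $k=4$ case and eliminates it via $SAC(4,4)$, whereas you exclude it implicitly by asserting $|u_4|=12$, which is indeed forced by your recorded inequality $r_{12}+r_{16}+\cdots\ge 1$, since a degree-$14$ top relation would leave that count at zero.
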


\begin{proof}
By the Land in Zero and $k-1$ Lemmas, we may assume that $|x_{k-1}| = |x_k| = 4$ and that $\delta:H^4 \to H^2$ has rank $m \geq 2$. Hence the Lemma \ref{lem:LargeRelations} implies that $|u_k| \geq 12$. This forces the formal dimension to be large, so we estimate it now.

Let $g_4 \geq m$ denote the number of generators of degree four. Using the formula for the formal dimension in Theorem \ref{thm:PureModel} and the Degree Inequality (Lemma \ref{lem:degreeInequality}), we have
	\begin{eqnarray*}
	\fd H^* 
	&\geq& \sum_{i=1}^{k-g_4} \of{|x_1| + |x_{i+1}| - |x_i|} + \sum_{i=k-g_4+1}^{k-1} {|x_i|} ~+~ \of{12 - |x_k|}\\
	&=& 2k + 2g_4 + 6.
	\end{eqnarray*}

If $g_4 \geq 3$, then $k \geq m + g_4 \geq 5$ and hence $\fd H^* > 20$. This is what we wish to show, so we may assume that $g_4 = m = 2$. The degree type is of the form
	\[(2,\ldots, 2, 4, 4; B_1, \ldots, B_k)\]
with $B_i \geq 2|x_i| = 4$ for $1 \leq i \leq k-3$, $B_{k-2} \geq |x_1| + |x_{k-1}| = 6$, $B_{k-1} \geq 2|x_{k-1}| = 8$, and $B_k \geq 12$. Going back to the estimate on $\fd H^*$, we see that $k \in \{4,5\}$.

If $k = 4$, then $\fd H^* = \sum_{i=1}^4 B_i - 12 \geq 18$. Since we may assume that $\fd H^* \leq 20$, it follows either that we have equality in all four of the lower bounds on the $B_i$ or that we have equality in three of the four bounds and we are off by two in the fourth. This gives rise to five possibilities for the degree type. Two of these are ruled out by the SAC(4,4) condition, one is ruled out by the bound $r_{12} + r_{16} + \ldots \geq m - r_4$ from Lemma \ref{lem:LargeRelations}, and the remaining two appear in the conclusion of the proposition.

If instead $k = 5$, then we estimate as above: $\fd H^* = \sum_{i=1}^5 B_i - 14 \geq 20$. Hence equality holds in all five of the lower bounds on the $B_i$, and we find that the degree type is the last one shown in statement of the proposition.
\end{proof}

\begin{proposition}\label{pro:8part2}
Let $H^*$ be a positively elliptic algebra that does not split. If the degree type is
	\[(2,2,4,4; 4,6,8,12),  (2,2,4,4; 4,8,8,12),  \mathrm{~or~} (2,2,2,4,4; 4,4,6,8,12),\]
then there does not exist a non-zero derivation with negative degree.
\end{proposition}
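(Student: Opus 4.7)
The plan is to argue by contradiction in each of the three degree types: assume there exists a non-zero derivation $\delta$ of negative degree on $H^*$, normalize $\delta$ to a standard form on the generators, and then use the constraint $\delta(u_i) \in I$ (as polynomials) to derive either a splitting (contradicting the hypothesis) or infinite-dimensionality (contradicting finite-dimensionality of $H^*$).

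\textbf{Normalization.} By the Land in Zero Lemma, $\delta$ vanishes on every degree-$2$ generator. Since the remaining generators have degree $4$ and $|\delta|$ is even and negative, we must have $|\delta| = -2$; otherwise $\delta$ vanishes on the degree-$4$ generators too, and then $\delta = 0$ by the $k-1$ Lemma. The $k-1$ Lemma then forces both degree-$4$ generators $x_{k-1}, x_k$ to have non-zero $\delta$-image, and if their images were linearly dependent a further change of basis on $\{x_{k-1}, x_k\}$ would annihilate one of them, again contradicting the $k-1$ Lemma. Hence $\delta$ has rank $2$ on $H^4 \to H^2$, and after a compatible change of basis on the degree-$2$ generators we may assume $\delta(x_{k-1}) = x_1$ and $\delta(x_k) = x_2$.

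\textbf{Clean case $(2,2,4,4; 4,8,8,12)$.} The relation $u_1$ is a quadratic in $x_1, x_2$ (no degree-$4$ generator can appear linearly in $\mathbb{Q}^{\geq 2}[x_1,\ldots,x_4]$), so $\delta(u_1) = 0$. For each degree-$8$ relation $u_i$, write
\[u_i = p_i(x_1, x_2) + x_3\, q_i(x_1, x_2) + x_4\, r_i(x_1, x_2) + h_{1,i}\, x_3^2 + h_{2,i}\, x_3 x_4 + h_{3,i}\, x_4^2.\]
The degree-$6$ part of $I$ is spanned by $x_1 u_1, x_2 u_1 \in \mathbb{Q}[x_1, x_2]$, which contains no $x_3, x_4$-terms, so matching the $x_3, x_4$-coefficients of $\delta(u_i)$ forces $h_{1,i} = h_{2,i} = h_{3,i} = 0$. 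Hence $u_1, u_2, u_3$ all lie in the polynomial-ring ideal $(x_1, x_2)$, so $H^*/(x_1, x_2) H^* \cong \mathbb{Q}[x_3, x_4]/(\bar u_4)$ has Krull dimension at least one, contradicting finite-dimensionality of $H^*$.

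\textbf{Harder cases and main obstacle.} In $(2,2,4,4;4,6,8,12)$ the degree-$6$ part of $I$ also contains the degree-$6$ relation $u_2$, and in $(2,2,2,4,4;4,4,6,8,12)$ there is an extra degree-$2$ generator $x_3$ and a second degree-$4$ relation $u_2$. A parallel parametrization-and-matching argument, combined with the derivation conditions $\delta(u_2), \delta(u_3) \in I$, reduces to a single resilient sub-case in which $u_1$ and the pure $x_{k-1}, x_k$-part of the degree-$8$ relation are governed by a common quadratic form $Q$ (with an analogous pencil structure in the third case); several other sub-cases that arise collapse to the polynomial-ideal argument of the clean case. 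Eliminating the resilient sub-case is the main obstacle. My plan is to diagonalize $Q$ (possibly after a base extension to $\bar{\mathbb{Q}}$), then analyze $\delta(u_k) \in I$ for the top relation $u_k$ of degree $12$: the rigidity imposed by $Q$ on the lower-degree relations should either reveal an adapted splitting of $H^*$ (contradicting the non-splitting hypothesis via Markl's theorem) or force the Jacobian determinant $\det(\partial u_i / \partial x_j)$ to vanish, violating Poincar\'e duality of $H^*$.
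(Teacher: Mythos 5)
Your normalization and your treatment of the middle degree type $(2,2,4,4;4,8,8,12)$ are correct, and that argument is genuinely different from (and shorter than) the paper's: you observe that the degree-six graded piece of the ideal is $\mathrm{span}\{x_1u_1,x_2u_1\}\subset \Q[x_1,x_2]$, so matching the $x_3$- and $x_4$-coefficients of $\delta(u_i)\in I$ kills the pure $x_3,x_4$-quadratic part of each degree-eight relation, and then $u_1,u_2,u_3\in(x_1,x_2)$ makes $\Q[x_3,x_4]/(\bar u_4)$ an infinite-dimensional quotient of $H^*$. The paper instead shows some degree-eight relation must have non-zero pure part, applies $\delta^2$ to identify that pure part with $u_1$, and continues with $\delta^3$ of the degree-twelve relation; your matching-in-degree-six shortcut is a nice alternative for this one case.

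The genuine gap is that the other two degree types, $(2,2,4,4;4,6,8,12)$ and $(2,2,2,4,4;4,4,6,8,12)$, are not proved: you correctly note that the degree-six piece of $I$ now contains $u_2$ (resp.\ extra generators and relations appear), so the coefficient-matching no longer forces the pure parts to vanish, and you reduce to a ``resilient sub-case'' --- essentially the configuration the paper isolates, where $u_1$ equals the pure quadratic part of the degree-eight relation evaluated at $(x_1,x_2)$ --- but you leave its elimination as a plan (``should either reveal \dots or force \dots''). Two concrete problems with that plan: (i) the non-splitting hypothesis is over $\Q$, so after base change to $\overline{\Q}$ a splitting of $H^*\otimes\overline{\Q}$ neither contradicts the hypothesis nor feeds the induction, which runs over $\Q$; that horn of your dichotomy needs a descent argument you do not supply, and the other horn (forcing $\det(\partial u_i/\partial x_j)=0$) is not argued at all. (ii) This sub-case is exactly where the real work of the proposition lies: the paper applies $\delta^3$ to the degree-twelve relation, writes the resulting degree-six identity $p_k(x_1,x_2)=\sum h_iu_i$, shows the degree-six relations cannot contribute (they would produce a splitting), and in the five-generator case runs a further case analysis ($u_{2,1}=0$, $h_2=0$, or a common linear factor $l_2$ of $p_4$ and $p_5$) to exhibit an ideal such as $(x_1,x_2,x_3,l_2(x_4,x_5))$ containing all relations, contradicting finite-dimensionality. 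Nothing in your sketch substitutes for this analysis, so as written two of the three cases of the proposition remain open.
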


\begin{proof}%[Proof of all three exceptional cases together]
We keep the notation from Lemma \ref{lem:LargeRelations}, with a slight modification. We may assume 
	\[\delta(x_{k-1}) = x_1 \mathrm{~and~} \delta(x_k) = x_2\]
and that $\delta(x_i) = 0$ for $1 \leq i \leq k-2$. In addition, after possibly swapping the two degree eight relations in the second case, we may assume that
	\[u_{k-1} = p_{k-1}(x_{k-1}, x_k) + r_{k-1}\]
with $p_{k-1} \neq 0$ and $r_{k-1} \in (x_1,\ldots,x_{k-2})$. Indeed, if $p_{k-1} = 0$ (and $p_{k-2} = 0$ in the second case), then $H^*$ admits a quotient map onto $\Q[x_1,\ldots,x_k]/(x_1,\ldots,x_{k-2},u_k)$, a contradiction to finite-dimensionality.

Applying $\delta^2$ as in the proof of Lemma \ref{lem:LargeRelations}, we find that 
	\[p_{k-1}(x_1,x_2) = u_1\]
after possibly changing basis in the degree four relations. In addition, in the case where $u_{k-2}$ also has degree eight, we find that $p_{k-2}$ is a multiple of $u_1$, where $u_{k-2} = p_{k-2}(x_{k-1}, x_k) + r_{k-2}$ and $r_{k-2} \in (x_1,\ldots,x_{k-2})$. In this case, we can replace $u_{k-2}$ by $u_{k-2} - \mu u_{k-1}$ for some $\mu \in \Q$ so that $p_{k-2} = 0$. In any case, we have shown that
	\[u_1,\ldots,u_{k-2} \in (x_1,\ldots,x_{k-2}).\]

We now extend the argument from Lemma \ref{lem:LargeRelations} by considering the degree $12$ relation $u_k$. Write
	\[u_k = p_k(x_{k-1}, x_k) + r_k\]
for some cubic polynomial $p_k$ and some $r_k \in (x_1,\ldots,x_{k-2})$. For degree reasons, we have that $\delta^3(r_k) = 0$ and hence that
	\[6 p_k(x_1,x_2) = \delta^3(u_k) \in (u_1,\ldots,u_k).\]
Note that $p_k(x_1,x_2)$ has degree six and can be expressed as
	\[p_k(x_1,x_2) = \sum_{i=1}^k h_i u_i\]
where $h_i \in \Q[x_1,\ldots,x_k]$ is a linear polynomial in the first $k-2$ variables if $|u_i| = 4$, where $h_i \in \Q$ if $|u_i| = 6$, and where $h_i = 0$ if $|u_i| \geq 8$.

We further claim that $h_i=0$ when $|u_i|=6$. Indeed, otherwise we can replace $u_i$ by the expression $\sum h_i u_i$ so that $u_i = p_k(x_1,x_2)$. For the degree types under consideration, this implies that $x_1,x_2,\dots, x_{k-2}$ generate a subalgebra $K^*$ that induces a splitting of $H^*$, a contradiction. We may therefore assume that $p_k(x_1,x_2) = h_1 u_1$ in the first two cases and that $p_k(x_1,x_2) = h_1 u_1 + h_2 u_2$ in the third.

To derive a contradiction in the first two cases (where $k = 4$), recall that $u_1 = p_{3}(x_1,x_2)$ and hence that $p_4(x_3,x_4)$ is in the ideal
	\[I = (x_1,x_2, p_3(x_3,x_4)).\]
For degree reasons, it follows that $I$ contains all four of the $u_i$ and hence that there exists a projection of $H^*$ onto $\Q[x_1,\ldots,x_4]/I$. Since the latter space has infinite dimension, this is a contradiction. 

To derive a contradiction in the last case (where $k = 5$), we consider the expression
	\[p_5(x_1,x_2) = h_1 u_1 + h_2 u_2.\]
Write $h_i = l_i(x_1,x_2) + k_i x_3$ for some linear polynomials $l_i$ and some $k_i\in\Q$, and write $u_2 = u_{2,0}(x_1,x_2) + x_3 u_{2,1}(x_1,x_2,x_3)$. We break the proof into cases.

\begin{itemize}
\item Suppose $u_{2,1} = 0$. This implies that $u_2$ is a polynomial in $x_1$ and $x_2$. Since $u_1 = p_4(x_1,x_2)$ as well, we see that $x_1$ and $x_2$ generate a subalgebra that induces a splitting of $H^*$, a contradiction. 

\item Suppose $h_2 = 0$. This implies that $u_1 = p_{4}$ divides $p_5$. Hence the ideal 
	\[I = (x_1,x_2,x_3,p_{4}(x_4,x_5))\]
contains all of the $u_j$, a contradiction to finite-dimensionality of $H^*$. 

\item Suppose instead that $u_{2,1} \neq 0$ and that $h_2 \neq 0$. Comparing coefficients of $x_3^2$ and $x_3^3$ in the above equation, we see that $h_2 = l_2 \neq 0$. Similarly, comparing coefficients of $x_3$, we find that $k_1 \neq 0$. 

Now $l_2$ divides $p_5 - h_1 p_4$, which can be written as
	\[\of{p_5 - l_1 p_{4}} - x_3 \of{k_1 p_{4}}.\]
It follows that $l_2$ divides both $p_5 - l_1 p_{4}$ and $k_1 p_{4}$ and hence $p_{4}$ and $p_5$ as well. Hence the ideal 
	\[I = (x_1,x_2,x_3,l_2(x_4,x_5))\] 
contains all five of the relations $u_j$, and we once again have a contradiction to the finite-dimensionality of $H^*$.
\end{itemize}

We have derived a contradiction in all cases, so the proof is complete.
\end{proof}

The two propositions in this section imply the following.

\begin{corollary}\label{cor:8}
Let $H^* \cong \Q[x_1,\ldots,x_k]/(u_1,\ldots,u_k)$ be a pure model for a positively elliptic algebra that does not split and has the property that 
	\[|x_{k-1}| + |x_k| \leq 8.\] 
If $H^*$ admits a non-zero derivation with negative degree, then $\fd H^* > 20$.
\end{corollary}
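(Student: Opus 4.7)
The plan is to combine Propositions \ref{pro:8} and \ref{pro:8part2} directly; no further ingredients are needed. Proposition \ref{pro:8} does the bulk of the work: under the stated hypotheses (pure model, no splitting, $|x_{k-1}|+|x_k|\leq 8$, non-zero negative-degree derivation), it forces either $\fd H^* > 20$ or one of the three exceptional degree types
\[(2,2,4,4;\,4,6,8,12),\quad (2,2,4,4;\,4,8,8,12),\quad (2,2,2,4,4;\,4,4,6,8,12).\]
Proposition \ref{pro:8part2} is precisely the complementary statement that each of these three exceptional degree types admits no non-zero derivation of negative degree.

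Given this, I would argue by contradiction. Suppose, toward a contradiction, that $\fd H^* \leq 20$. Since $H^*$ is assumed to admit a non-zero negative-degree derivation, Proposition \ref{pro:8} forces the degree type of $H^*$ to be one of the three exceptional ones above. But Proposition \ref{pro:8part2} says that no such derivation can exist for those degree types, contradicting our standing assumption. Hence $\fd H^* > 20$, as required.

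There is no real obstacle here: the genuine work lies in the two propositions, and in particular in Lemma \ref{lem:LargeRelations} and the case-by-case ideal-containment arguments of Proposition \ref{pro:8part2}. The corollary simply packages their joint conclusion into a clean statement that is convenient for the inductive argument in Section \ref{sec:Proof}, which splits the analysis according to the value of $|x_{k-1}| + |x_k|$; this corollary disposes of the range $|x_{k-1}| + |x_k| \leq 8$ in a single step.
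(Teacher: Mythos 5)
Your proposal is correct and coincides with the paper's own argument: the paper derives Corollary \ref{cor:8} exactly by combining Propositions \ref{pro:8} and \ref{pro:8part2}, with the former isolating the three exceptional degree types when $\fd H^* \leq 20$ and the latter ruling out negative-degree derivations for those types. Your contradiction framing is just a packaging of the same implication, so nothing is missing.
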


\bigskip\section{The Top-to-Bottom Lemma}\label{sec:Top-to-Bottom}\medskip
In this section, we prove the Top-to-Bottom Lemma and use it to prove Corollary \ref{cor:10}. This is main step in the proof of the Halperin Conjecture when the degrees of the two largest generators satisfy $|x_{k-1}| + |x_k| = 10$.

%%%%%%%%%%%%%%%%%%%%%%%%%%%%
% --------------- delta-chain lemma ----------------------------%
%%%%%%%%%%%%%%%%%%%%%%%%%%%%
\begin{lemma}[Top-to-Bottom Lemma]\label{lem:Top-to-Bottom}
Let $H^* \cong \Q[x_1,\ldots,x_k]/(u_1,\ldots,u_k)$ be a positively elliptic algebra that does not split and that satisfies $|u_k| < 3|x_k|$. If there exists a derivation $\delta$ on $H^*$ and $l \geq 1$ such that the map
	\[\delta^l \colon H^{|x_k|} \to H^{|x_1|}\]
exists and is non-zero, then in fact this map has rank at least two.
\end{lemma}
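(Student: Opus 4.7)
The plan is to argue by contradiction. Suppose $\delta^l|_{H^{|x_k|}}$ is nonzero of rank exactly one, so that its image is a line $\Q z \subseteq H^{|x_1|}$. Because $H^{|x_1|}$ is spanned by the generators of degree $|x_1|$, a linear change of basis among those generators---an automorphism of $\Q[x_1,\ldots,x_k]$ preserving the pure model structure---allows us to assume $z = x_1$. Let $x_{k-g+1},\ldots,x_k$ denote the generators of maximum degree $|x_k|$. A further linear change of basis among these generators puts us in one of two cases. In Case (A), we may assume $\delta^l(x_k) = x_1$ and $\delta^l(x_j) = 0$ for every other generator $x_j$ of degree $|x_k|$. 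In Case (B), $\delta^l$ vanishes on every generator of degree $|x_k|$, and the rank-one contribution must come from some product $p = x_{i_1}\cdots x_{i_r}$ of degree $|x_k|$ with $r \geq 2$ and $\delta^l(p) \neq 0$.

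The main structural input is the hypothesis $|u_k| < 3|x_k|$: combined with $|u_k| \geq 2|x_k|$ from the pure model, this forces every monomial appearing in the top relation $u_k$ to contain at most two factors of degree $|x_k|$. Write $u_k = Q + L + R$, where $Q$, $L$, and $R$ are, respectively, quadratic, linear, and constant in the maximum-degree generators, with coefficients polynomial in the lower-degree variables. Applying $\delta$ iteratively to the identity $u_k \equiv 0$ in $H^*$ and using $\delta(u_k) \in I$ at each step, together with the Leibniz rule $\delta^l(fg) = \sum_a \binom{l}{a} \delta^a(f)\delta^{l-a}(g)$, I would track which contributions survive under the rank-one hypothesis. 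In Case (A), the surviving contribution from the $x_k^2$-term in $Q$ produces a multiple of $x_1^2$ times a polynomial in the remaining variables, which must lie in $I$; combined with the non-splitting hypothesis and the Degree Inequality (Lemma \ref{lem:degreeInequality}), this should force $x_1$ together with a proper subset of the other generators to span a subalgebra inducing a splitting of $H^*$, contradicting the hypothesis via Markl's theorem.

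The main obstacle will be Case (B), where the rank-one contribution is hidden inside a product. There I would choose $p$ of minimal length $r$ among products of degree $|x_k|$ with $\delta^l(p) \neq 0$. Expanding $\delta^l(p)$ by the iterated Leibniz rule, the minimality of $r$ together with the vanishing of $\delta^l$ on all generators of degree $|x_k|$ should isolate a specific multilinear expression in lower-order iterates $\delta^{a_j}(x_{i_j})$, effectively reducing Case (B) to an analogue of Case (A) applied to this expression. Rerunning the $u_k$-analysis then yields the same splitting contradiction. I expect Case (B) to require more careful combinatorial bookkeeping than Case (A), but the same basic mechanism---exploiting the bound $|u_k| < 3|x_k|$ to produce a nontrivial element of $I$ with atypical structure, then deducing a forbidden splitting---should close both cases.
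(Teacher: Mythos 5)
Your proposal has the right opening move (normalize so the rank-one image is $\Q x_1$ and work with a relation of degree $<3|x_k|$), but it has two genuine gaps. First, your Case (A) analysis hinges on ``the surviving contribution from the $x_k^2$-term in $Q$'' of the top relation $u_k$, yet nothing in your argument guarantees that $u_k$ (or any particular relation you have singled out) actually contains $x_k^2$ with non-zero coefficient; $u_k$ could lie entirely in the ideal $(x_1,\ldots,x_{k-1})$, and then your computation produces nothing. The missing step is finite-dimensionality: since $H^*$ is Artinian, some relation $u_i$ lies outside $(x_1,\ldots,x_{k-1})$, so it contains a monomial that is a pure power of $x_k$; the bound $|u_i|\leq|u_k|<3|x_k|$ together with $u_i\in\Q^{\geq2}[x_1,\ldots,x_k]$ forces that monomial to be exactly $x_k^2$, so after scaling and completing the square $u_i=x_k^2+g$ with $g\in\Q[x_1,\ldots,x_{k-1}]$. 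Second, your endgame only anticipates a splitting contradiction. Applying $\delta^{2l}$ gives $\delta^{2l}(u_i)=\binom{2l}{l}x_1^2+\delta^{2l}(g)$, an element of the ideal in degree $2|x_1|$, hence a combination of the minimal-degree relations. If $\delta^{2l}(g)=0$ you indeed get $u_1\in\Q[x_1]$ and a splitting (note you do not need Markl's theorem here: non-splitting is a hypothesis of the lemma). But if $\delta^{2l}(g)\neq 0$ there is no splitting at all; instead a degree count closes the case: writing a surviving Leibniz term as $\delta^{j_1}(x_{i_1})\cdots\delta^{j_p}(x_{i_p})\neq 0$ with $i_m<k$ and $j_1+\cdots+j_p=2l$, each non-zero factor has degree at least $|x_1|$ while the total degree is $2|x_k|+2l|\delta|=2|x_1|$, forcing $p=2$, $|x_{i_1}|=|x_{i_2}|=|x_k|$, $j_1=j_2=l$, i.e.\ some top-degree generator other than $x_k$ is not killed by $\delta^l$, contradicting your normalization. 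Your sketch does not address this horn, and the vague ``splitting of $x_1$ with a proper subset of generators'' would simply fail there.

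The same degree count also disposes of your Case (B), which you flag as the main obstacle and propose to handle by combinatorial bookkeeping: if $p=x_{i_1}\cdots x_{i_r}$ is a product of $r\geq 2$ generators of total degree $|x_k|$, then any non-zero Leibniz term of $\delta^l(p)$ would have $r$ non-zero factors each of degree at least $|x_1|$, while the total degree is $|x_k|+l|\delta|=|x_1|$; hence $r\leq 1$, so $\delta^l$ annihilates every decomposable element of $H^{|x_k|}$ and Case (B) is vacuous. In short, the one tool your proposal never isolates---the elementary count ``every non-zero iterate $\delta^j(x_i)$ has degree at least $|x_1|$'' (a consequence of the Land in Zero Lemma and the degree gap below $|x_1|$)---is precisely what both dispatches Case (B) and supplies the second contradiction in Case (A); without it the plan as written does not go through.
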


This lemma is reminiscent of the $k - 1$ Lemma, which states that a derivation with negative degree is non-zero only if it has rank at least two.

\begin{proof}
Without loss of generality, we may assume that $|\delta|$ divides $|x_k| - |x_1|$, and we may fix $l \geq 1$ such that $\delta^l$ maps $H^{|x_k|}$ into $H^{|x_1|}$. We may also assume that this map has rank exactly one and change basis, if necessary, so that $\delta^l(x_k) = x_1$ and that $\delta^l(x_i) = 0$ for $i < k$.

Consider the ideal in $H^*$ generated by $x_1,\ldots,x_{k-1}$. Since $H^*$ is finite-dimensional, there exists some relation $u_i$ not in this ideal. Since $|u_i| < 3|x_k|$, we must have
	\[u_i = \lambda x_k^2 + x_k f + g\]
for some non-zero $\lambda \in \Q$ and some $f, g \in \Q[x_1,\ldots,x_{k-1}]$. By scaling $u_i$, we may assume $\lambda = 1$, and then completing the square and replacing $x_k$ by $x_k + \frac 1 2 f$, we may assume that $f = 0$.

We apply $\delta^{2l}$ to this equation. On the left-hand side, we see that $\delta^{2l}(u_i)$ is in the ideal $(u_1,\ldots,u_k)$ and has (minimal) degree $2|x_1|$. In particular, $\delta^{2l}(u_i)$ is a rational linear combination of the $u_i$ with minimal degree. Hence either it is zero or it is $u_1$ after possibly replacing $u_1$ by this linear combination.

On the right-hand side, note that
	\[\delta^{2l}(x_k^2) = \binom{2l}{l} \of{\delta^l x_k}^2= \binom{2l}{l} x_1^2.\] 
If it is the case that $\delta^{2l}(g) = 0$, then both sides of the equation are non-zero and we have $u_1 \in \Q[x_1]$, a contradiction to the assumption that $H^*$ does not split. Hence we may assume that $\delta^{2l}(g) \neq 0$. 

Now $g$ is a polynomial in $x_1,\ldots,x_{k-1}$, so there exists a monomial $x_{i_1}\cdots x_{i_p}$ appearing in $g$ such that \[\delta^{2l}(x_{i_1}\cdots x_{i_p}) \neq 0.\]  Furthermore, by the Leibniz rule, there exists $j_1 + \ldots + j_p = 2l$ such that \[\delta^{j_1}(x_{i_1})\cdots\delta^{j_p}(x_{i_p}) \neq 0.\] Each term in this product is non-zero and hence has degree at least $|x_1|$. Summing, we have
	\[p|x_1| \leq \of{|x_{i_1}| + j_1|\delta|} + \ldots + \of{|x_{i_p}| + j_p|\delta|} = 2|x_k| + 2l |\delta| = 2|x_1|.\]
Hence $p \leq 2$. At the same time, $x_k$ has maximal degree among the generators, so $p = 2$ and equality holds in the estimate above. It follows that some $\delta^l(x_i) \neq 0$ with $x_i \neq x_k$, and this implies a contradiction to our choice of basis at the beginning of the proof.
\end{proof}

Using the Top-to-Bottom Lemma, we can nearly prove the theorem under the condition $|x_{k-1}| + |x_k| = 10$. The exceptional case given in Proposition \ref{pro:10part1} is proved in Proposition \ref{pro:10part2} using another argument.

\begin{proposition}\label{pro:10part1} Let $H^*$ be a positively elliptic algebra that does not split. If there exists a non-zero derivation of negative degree and $|x_{k-1}| + |x_k| = 10$, then either $\fd H^* > 20$ or the degree type is equal to
	\[(2,2,2,4,6; 4,4,6,10,12).\]
\end{proposition}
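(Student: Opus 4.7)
My plan is to perform a case analysis on $(|x_{k-1}|,|x_k|)\in\{(2,8),(4,6)\}$. The case $(2,8)$ is immediate: every generator other than $x_k$ has degree $2$, so \cref{lem:land} forces $\delta(x_i)=0$ for $i<k$, and \cref{lem:kmo1} then gives $\delta=0$, a contradiction. For the remaining case $(4,6)$, I invoke the three-generator case of Halperin's conjecture (see \cite{Lupton90,Chen99}) to assume $k\geq 4$, and combine $\fd H^*=\sum(|u_i|-|x_i|)$ with \cref{lem:degreeInequality} to get $\fd H^*\geq(k-2)|x_1|+|u_k|$. Under $\fd H^*\leq 20$ this yields $|u_k|\leq 24-2k\leq 16<18=3|x_k|$, so the Top-to-Bottom Lemma applies.

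I then dispose of the subcase $g_2=0$ via a refined formal-dimension bound: with $|x_1|=4$ and $H^2=0$, $\delta$ vanishes on degree-$4$ generators, so $g_6\geq 2$ by \cref{lem:kmo1}; but $\fd H^*\geq 4g_4+6g_6+2$ under $\fd H^*\leq 20$ with $g_4\geq 1$ forces $(g_4,g_6)=(1,2)$ and hence $k=3$, a contradiction. So $g_2\geq 1$, i.e., $|x_1|=2$. Next, $|\delta|=-4$ is ruled out uniformly: by \cref{lem:land}, such a $\delta$ vanishes on every generator of degree $2$ or $4$, so $\delta\colon H^6\to H^2$ has rank at most $\min(g_2,g_6)\leq 1$ in each admissible triple; the Top-to-Bottom Lemma forces the rank to zero, giving $\delta=0$, a contradiction. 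Hence $|\delta|=-2$. Because $\delta$ kills all products of degree-$2$ generators, $\delta^2(H^6)$ is contained in the $\delta$-image of the degree-$4$ generators; in each admissible triple this has dimension at most $\min(m,g_6)\leq 1$, so Top-to-Bottom gives $\delta^2(H^6)=0$.

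The Large Relations Lemma, Part 2, then yields
\[
r_{10}+r_{12}+\cdots \geq g_6+\max(1,m-r_4),
\]
where $m\geq 1$ is the rank of $\delta\colon H^4\to H^2$. Combined with $\fd H^*\geq 2g_2+4g_4+6g_6+2$ and $\fd H^*\leq 20$ we get $g_2+2g_4+3g_6\leq 9$. With $g_2,g_4,g_6\geq 1$ and $k\geq 4$, the possible triples $(g_2,g_4,g_6)$ are $(2,1,1),(3,1,1),(4,1,1),(1,2,1),(2,2,1),(1,1,2)$. For each triple I enumerate degree types consistent with the $|u_i|$-bounds and the SAC (\cref{thm:SAC}), then apply the Large Relations inequality to discard those with too few large relations. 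This fully rules out $(4,1,1)$, $(2,2,1)$, and $(1,1,2)$, while the triples $(2,1,1)$, $(3,1,1)$, and $(1,2,1)$ retain several residual candidates, among which the exceptional degree type $(2,2,2,4,6; 4,4,6,10,12)$ appears in $(3,1,1)$. For each non-exceptional residual type, I plan a direct algebraic analysis: writing the relations as sums of monomials, computing $\delta(u_j)$, and matching against the ideal in degree $|u_j|-2$ produces linear constraints on the coefficients of the $u_j$; after a suitable change of basis the initial segment of relations is forced into a proper polynomial subring, so Markl's theorem produces a splitting, contradicting the non-splitting hypothesis.

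The main obstacle will be this last step. While Top-to-Bottom, Large Relations, the SAC, and the formal dimension bounds drastically shrink the enumeration, eliminating each non-exceptional residual degree type---for instance $(2,2,4,6; 4,6,10,12)$ in the $(2,1,1)$ triple or $(2,4,4,6; 6,8,10,12)$ in the $(1,2,1)$ triple---requires careful tracking of how $\delta$ acts on the coefficients of each relation, and Markl's theorem must be deployed after changes of basis chosen to align an initial segment of relations with a polynomial subring.
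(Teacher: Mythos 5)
Your reductions track the right tools, and several of them coincide with the paper's: ruling out $(|x_{k-1}|,|x_k|)=(2,8)$ by the Land in Zero and $k-1$ Lemmas, forcing $|x_1|=2$ and $|\delta|=-2$, getting $\delta^2(H^6)=0$ from the Top-to-Bottom Lemma (its hypothesis $|u_k|<3|x_k|$ being secured by a formal-dimension estimate), and then feeding this into Part 2 of the Large Relations Lemma. But the proposition asserts that \emph{no} degree type other than $(2,2,2,4,6;4,4,6,10,12)$ survives when $\fd H^*\leq 20$, and your sieve does not deliver that: as you acknowledge, several $k=4$ types --- e.g.\ $(2,2,4,6;4,6,10,12)$, $(2,2,4,6;4,8,10,12)$, $(2,2,4,6;6,6,10,12)$, $(2,4,4,6;6,8,10,12)$ --- pass the SAC, the Degree Inequality, the Large Relations bound, and $\fd H^*\leq 20$. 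Eliminating these is precisely the content of the proposition at $k=4$, and your plan for it (``linear constraints on coefficients, then a change of basis forces an initial segment of relations into a subring, then Markl'') identifies no mechanism that actually closes the argument; moreover Markl's theorem is the wrong tool there, since non-splitting is already a hypothesis --- what is needed is a direct contradiction, not a transfer of derivations to a factor. So the proposal has a genuine gap exactly at the step you flag as the main obstacle.

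The paper fills this hole with two concrete steps absent from your outline. First it pins down $|u_k|=12$: if $|u_k|>12$, the $SAC(6)$ condition forces $|u_{k-1}|\geq 12$ or $|u_k|\geq 18$, whence $\fd H^*\geq 20$ with equality forcing $k=3$ and $|u_1|-|x_1|=4$, contradicting Part 2 of Lemma \ref{lem:degreeInequality} because $\delta(x_2)=\lambda x_1\neq 0$ by the $k-1$ Lemma (this also disposes of $k=3$ without citing the three-generator literature). Second, for $k=4$, after normalizing $\delta(x_3)=x_1$ and $\delta(x_4)=p(x_2)$ (using $\delta^2(x_4)=0$ from the Top-to-Bottom Lemma), finite-dimensionality forces a relation of the form $x_3^2+r$ or $x_3^3+r$ with $r$ in $(x_1,x_2,x_4)$; applying $\delta^2$ resp.\ $\delta^3$ places $2x_1^2$ resp.\ $6x_1^3$ in the relation ideal, and a short case analysis (a change of basis among the degree-four relations giving $u_1=2x_1^2$, or in the cubic case either $u_1$ a multiple of $x_1^2$ or two degree-four relations violating Part 1 of Lemma \ref{lem:degreeInequality}) produces a splitting, contradicting the hypothesis. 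This one argument kills all residual $k=4$ types at once, while the $k\geq 5$ estimate forces the exceptional type. Without an argument of this kind your proof is incomplete. Two minor points: your $g_2=0$ detour is unnecessary, since $|x_{k-1}|=4$ forces $g_6=1$ and Land in Zero plus the $k-1$ Lemma finish immediately; and the exclusion of $|\delta|=-4$ likewise follows from the $k-1$ Lemma alone, without Top-to-Bottom.
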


\begin{proof}
Since $|x_{k-1}|$ and $|x_k|$ are positive, even numbers summing to $10$, and since $|x_{k-1}| \neq 2$ by the Land in Zero and $k-1$ Lemmas, we may assume that $|x_{k-1}| = 4$ and $|x_k| = 6$. In addition, we may assume that
	\[\delta(x_{k-1}) = x_1\]
up to a change in basis. Note also that $k \geq 3$.

First suppose that $|u_k| > 12$. By the condition $SAC(6)$, there is a relation whose degree is properly divisible by six. In particular, $|u_{k-1}| \geq 12$ or $|u_k| \geq 18$, and hence
	\[\sum_{i=k-1}^k \of{|u_i| - |x_i|} = |u_{k-1}| + |u_k| - 10 \geq 16.\]
Note also that
	\[|u_{k-2}| - |x_{k-2}| \geq \max\of{|x_{k-2}|, |x_1| + |x_{k-1}| - |x_{k-2}|}.\]
Since the maximum is at least the average, and since the left-hand side is even, this is at least four. Substituting these estimates into the formula for the formal dimension and applying the Degree Inequality, we have
	\[\fd H^* \geq \sum_{i=k-2}^{k}\of{|u_i| - |x_i|} \geq 20.\]
Since we may assume that $\fd H^* \leq 20$, we have equality everywhere. In particular, $k = 3$ and $|u_1| - |x_1| = 4$. But the $k-1$ Lemma implies that $\delta(x_2) = \lambda x_1 \neq 0$, so Part 2 of the Degree Inequality implies that $|u_1| \geq |x_1| + |x_3|$. This is a contradiction, and we may assume that $|u_k| = 12$.

The Top-to-Bottom Lemma implies that $\delta^2(x_k) = 0$. After replacing $x_k$ by something of the form $x_k - l(x_1,\ldots,x_{k-2})x_{k-1}$, we may assume that
	\[\delta(x_k) = p(x_2,\ldots,x_{k-2}) \neq 0.\]
In particular, $k \neq 3$, since otherwise this expression implies that $\delta(x_3) = 0$, a contradiction to the $k-1$ Lemma. Assume then that $k \geq 4$.

The condition $\delta^2(x_k) = 0$ also means that we can apply the the second part of Lemma \ref{lem:LargeRelations}. Hence $|u_{k-1}| \geq 10$. 

Suppose for a moment that $k \geq 5$. Since $|u_{k-1}| - |x_{k-1}| \geq 6$ and $|u_k| - |x_k| = 6$, we can estimate the formal dimension as above to obtain
	\[\fd H^* \geq (k-3)|x_1| + 4 + 6 + 6 \geq 20.\]
Hence we may assume that equality holds in these estimates. It follows that the degree type is of the form
	\[(2, A_2, A_3, 4, 6; 2+A_2, 2+A_3, 6, 10, 12).\]
But now the bounds $|u_i| \geq 2|x_i|$ for all $i$ imply that $A_3 = 2$ and $A_2 = 2$, so this is the exceptional case given in the conclusion of the proposition. 

We may assume therefore that $k = 4$. In particular,
	\[\delta(x_3) = x_1 \mathrm{~and~} \delta(x_4) = p(x_2),\]
where $p$ is linear if $|x_2| = 4$ and quadratic if $|x_2| = 2$. 

Since $H^*$ is finite-dimensional, not all of the $u_i$ lie in the ideal $I=(x_1,x_2, x_4)$, since otherwise $H^*$ projects onto the infinite-dimensional algebra $\Q[x_1,\ldots,x_4]/I$.  Hence there exists a relation (up to scaling) of the form
	\[u_i = x_{3}^2 + r	\mathrm{~or~} u_i = x_{3}^3 + r.\]
For degree reasons, the structure of $\delta$ implies that $r \in \ker(\delta^2)$ in the first case or $r \in \ker(\delta^3)$ in the second. Applying $\delta^2$ or $\delta^3$, we see that
	\[2 x_1^2 = \delta^2(u_i) \mathrm{~or~} 6 x_1^3 = \delta^3(u_i).\]
Since $\delta$ preserves the ideal $(u_1, \ldots,u_4)$, the right-hand side of each expression lies in this ideal. In the first case, we may perform a change of basis on the degree four $u_i$ to obtain $u_1 = 2x_1^2$. This gives rise to a splitting by the subalgebra generated by $x_1$, a contradiction to the assumptions of the proposition. 

Similarly, the second case gives rise to a contradiction if it is possible to change basis so that some $u_j = 6x_1^3$. Therefore we may assume that
	\[6x_1^3 = \sum l_j u_j\]
where the $l_j$ are linear polynomials in the degree two generators and the $u_j$ are degree four relations. Now if $u_1$ is the only one degree four relation, then $u_1$ is a multiple of $x_1^2$, which is again a contradiction. But then we must have that $|u_1| = |u_2| = 4$, so we have that
	\[|u_2| < 6 = |x_1| + |x_3|.\]
By the Degree Inequality, it follows that $x_1$ and $x_2$ generate a subalgebra of $H^*$ that induces a splitting. This is a contradiction, so the proof is complete.
\end{proof}

We now deal with the exceptional case in the following, the proof of which has a different strategy.

\begin{proposition}\label{pro:10part2}
Let $H^* \cong \Q[x_1,\ldots,x_k]/(u_1,\ldots,u_k)$ be a positively elliptic algebra that does not split. If the degree type is 
	\[(2,2,2,4,6; 4,4,6,10,12),\]
then $H^*$ does not admit a non-zero derivation of negative degree.
\end{proposition}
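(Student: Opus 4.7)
The plan is to derive a contradiction from the assumption that $H^*$ admits a non-zero derivation $\delta$ of negative degree by comparing the $x_5$-coefficients on the two sides of $\tilde\delta(u_4)\in I$ in a polynomial lift.

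First, I would pin down $\delta$ on generators. The Land in Zero Lemma forces $\delta$ to vanish on the three degree-two generators $x_1,x_2,x_3$. The $k-1$ Lemma then forces $\delta(x_4)\neq 0$, for otherwise $\delta$ would vanish on four of the five generators. Since $\delta(x_4)\in H^{4+|\delta|}$ is non-zero and $\delta$ has negative degree, Land in Zero gives $|\delta|=-2$, so after a change of basis in $\{x_1,x_2,x_3\}$ I may take $\delta(x_4)=x_1$. Notably, no information about $\delta(x_5)$ is needed.

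Next I would analyze the structure of the relations. In a pure model, polynomial-degree-one monomials are forbidden, so for degree reasons $u_1,u_2\in\Q[x_1,x_2,x_3]$ and $u_3\in\Q[x_1,x_2,x_3,x_4]$ (no $x_5$, since $x_5$ alone is the only polynomial-degree-one monomial of degree six). The degree-ten relation decomposes uniquely as
\[
u_4 = x_5\bigl(\alpha x_4 + \sigma(x_1,x_2,x_3)\bigr) + B(x_1,x_2,x_3,x_4),
\]
with $\alpha\in\Q$, $\sigma$ quadratic, and $B$ of degree ten. Setting $x_1=x_2=x_3=0$ exhibits a surjection of $H^*$ onto $\Q[x_4,x_5]/(\alpha x_4 x_5,\, ax_5^2+bx_4^3)$ for some $a,b\in\Q$; finite-dimensionality of the latter forces $\alpha\neq 0$.

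The main step is to lift $\delta$ to a derivation $\tilde\delta$ of $R=\Q[x_1,\ldots,x_5]$ preserving $I=(u_1,\ldots,u_5)$, and to extract the $x_5$-coefficient on both sides of $\tilde\delta(u_4)\in I$. Since $R^4$ (the degree-four part of $R$) contains no $x_5$-monomial, the representative $\tilde\delta(x_5)$ lies in $\Q[x_1,x_2,x_3,x_4]$; likewise $A:=\alpha x_4+\sigma$ and $B$ lie in $\Q[x_1,x_2,x_3,x_4]$. Expanding
\[
\tilde\delta(u_4) = \tilde\delta(x_5)\cdot A + x_5\cdot \tilde\delta(A) + \tilde\delta(B),
\]
the only term containing $x_5$ is $x_5\,\tilde\delta(A)=x_5\cdot\alpha x_1$. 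On the other hand, since $|u_4|,|u_5|>8$, we have $I\cap R^8=R^4u_1+R^4u_2+R^2u_3$; every factor here lies in $\Q[x_1,x_2,x_3,x_4]$, so $I\cap R^8$ contains no $x_5$-term. Comparing $x_5$-coefficients yields $\alpha x_1=0$ in $R$, hence $\alpha=0$, contradicting the finite-dimensionality conclusion.

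The main obstacle is purely bookkeeping: one must rigorously justify, via pure-model degree analysis, both that $u_1,u_2,u_3$ cannot involve $x_5$ and that no element of $I$ in degree eight can produce an $x_5$-term, while the finite-dimensionality argument for $\alpha\neq 0$ relies on the fact that no lower-degree relation is capable of killing $x_4 x_5$ modulo $(x_1,x_2,x_3)$. Once both ingredients are in place, the contradiction is immediate, and no appeal to Top-to-Bottom, further basis changes on $x_5$, or case analysis on the structure of $u_1,u_2$ is required.
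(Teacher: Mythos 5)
Your proof is correct, and at its heart it runs on the same mechanism as the paper's: finite-dimensionality forces the degree-ten relation to contain the monomial $x_4x_5$ with nonzero coefficient, and applying the derivation then produces an $x_1x_5$-term that cannot occur in a degree-eight element of the ideal. The difference lies in the bookkeeping and the inputs. You certify the impossibility by comparing $x_5$-coefficients, using $I\cap R^8=R^4u_1+R^4u_2+R^2u_3\subset\Q[x_1,\ldots,x_4]$, whereas the paper uses the polynomial-degree filtration, showing $\delta(u_4)\in x_1x_5+\Q^{\geq 3}[x_1,\ldots,x_5]$ while degree-eight ideal elements lie in $\Q^{\geq 3}[x_1,\ldots,x_5]$. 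More substantively, the paper imports from Proposition \ref{pro:10part1} the normalization $\delta(x_5)=p(x_2,x_3)$, which rests on the Top-to-Bottom Lemma and hence on the non-splitting hypothesis, while you correctly observe that no control of $\delta(x_5)$ is needed because every degree-four element is automatically $x_5$-free; as a byproduct your argument never invokes the non-splitting hypothesis, so it proves a slightly stronger, self-contained statement. What the paper's route buys is uniformity with the setup of Proposition \ref{pro:10part1}, whose choice of basis it simply reuses. The only step you should spell out in a final write-up is the (standard, and asserted in Section \ref{sec:Preliminaries}) fact that $\delta$ lifts to a derivation of $\Q[x_1,\ldots,x_5]$ preserving $(u_1,\ldots,u_5)$, with $\tilde\delta(x_i)=0$ for $i\leq 3$ and $\tilde\delta(x_4)=x_1$ holding on the nose because the ideal is zero in degrees $0$ and $2$.
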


\begin{proof}
As in the proof of the previous proposition, we may assume that 
	\[\delta(x_4) = x_1 \mathrm{~and~} \delta(x_5) = p(x_2,x_3).\]
Consider the ideal $I = (x_1,x_2,x_3, u_5)$. For degree reasons, $u_j \in I$ for all $j \neq 4$. Since $H^*$ is finite-dimensional, it follows that $u_4 \not\in I$. After scaling $u_4$, if necessary, we have
	\[u_4 = x_4 x_5 + r_4\]
with $r_4 \in I$. 

Note that $r_4$ has degree ten. For degree reasons, it is a polynomial in $\Q^{\geq 3}[x_1,\ldots,x_5]$. Note that $\delta$ preserves this subspace.
Since, in addition, $x_4 \delta(x_5) = x_4 p(x_2,x_3)$ is in this subspace, we have that
	\[\delta(u_4) \in x_1 x_5 + \Q^{\geq 3}[x_1,\ldots,x_5].\]
On the other hand, $\delta(u_4)$ is a degree eight element of the ideal $(u_1,\ldots,u_5)$. For degree reasons, this implies that
	\[\delta(u_4) = \sum_{i=1}^3 h_i u_i\]
with $h_i \in \Q^{\geq 1}[x_1,\ldots,x_5]$. But each $u_j$ is an element of $\Q^{\geq 2}[x_1,\ldots,x_5]$, so $\delta(u_j)$ is as well. Hence this equation shows that $\delta(u_4) \in \Q^{\geq 3}[x_1,\ldots,x_5]$, a contradiction.
\end{proof}

The propositions above imply the following:

\begin{corollary}\label{cor:10}
Let $H^* \cong \Q[x_1,\ldots,x_k]/(u_1,\ldots,u_k)$ be a pure model for a positively elliptic algebra that does not split and that satisfies
	\[|x_{k-1}| + |x_k| = 10.\]
If $H^*$ admits a non-zero derivation with negative degree, then $\fd H^* > 20$.
\end{corollary}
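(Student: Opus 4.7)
The plan is to deduce the corollary directly from the two propositions proved immediately beforehand, namely Proposition \ref{pro:10part1} and Proposition \ref{pro:10part2}. Since all the real work has been done in establishing those propositions, the corollary should follow by essentially combining them, with perhaps a sentence to explain why the hypotheses transfer.

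First, I would fix a pure model $H^* \cong \Q[x_1,\ldots,x_k]/(u_1,\ldots,u_k)$ of a positively elliptic algebra that does not split, that satisfies $|x_{k-1}| + |x_k| = 10$, and that admits a non-zero derivation $\delta$ of negative degree. The hypotheses of Proposition \ref{pro:10part1} are then satisfied verbatim, so that proposition yields two alternatives: either $\fd H^* > 20$ (which is the desired conclusion) or the degree type of $H^*$ is exactly $(2,2,2,4,6; 4,4,6,10,12)$.

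In the latter case, I would invoke Proposition \ref{pro:10part2}, which states that no positively elliptic algebra with this specific degree type (assuming it does not split) admits a non-zero derivation of negative degree. This contradicts the existence of our derivation $\delta$, ruling out the exceptional case and leaving only $\fd H^* > 20$, which completes the proof.

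There is no real obstacle in this final step, since the work has been front-loaded into the two propositions. The only thing to be careful about is that the hypotheses of the two propositions match the hypotheses of the corollary: both assume $H^*$ does not split, and the exceptional degree type produced by Proposition \ref{pro:10part1} is precisely the one excluded by Proposition \ref{pro:10part2}. Thus the two results fit together with no gap, and the corollary is an immediate formal consequence.
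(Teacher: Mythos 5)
Your proposal is correct and is exactly how the paper deduces Corollary \ref{cor:10}: Proposition \ref{pro:10part1} gives either $\fd H^* > 20$ or the exceptional degree type $(2,2,2,4,6;4,4,6,10,12)$, and Proposition \ref{pro:10part2} rules out the latter under the non-splitting hypothesis. No difference from the paper's argument.
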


\bigskip\section{Proof of the main theorem}\label{sec:Proof}\medskip

In this section, we finish the proof of the Halperin Conjecture for formal dimensions at most $20$. We are given a positively elliptic algebra
	\[H^* \cong \Q[x_1,\ldots,x_k]/(u_1,\ldots,u_k)\]
as in Theorem \ref{thm:PureModel}, and we assume the existence of a non-zero derivation $\delta$ on $H^*$ of negative degree. We seek a contradiction.

If the formal dimension $\fd H^* = 2$, then Theorem \ref{thm:PureModel} implies that $k = 1$. Hence the Land in Zero Lemma implies that $\delta = 0$, a contradiction. We may therefore inductively assume that $2 < \fd H^* \leq 20$ and that the Halperin Conjecture holds for formal dimensions less than $\fd H^*$.

In particular, we may assume by Markl's theorem that $H^*$ does not split. Hence the Degree Inequality applies to the degrees of the relations $u_i$. Corollaries \ref{cor:8} and \ref{cor:10} also apply, and together they imply that
	\[|x_{k-1}| + |x_k| \geq 12.\]
Putting these facts together, we can finish the proof in all but two exceptional cases.

\begin{proposition}\label{pro:12orLarger-part1} Let $H^* = \Q[x_1,\ldots,x_k]/(u_1,\ldots,u_k)$ be a positively elliptic algebra with no non-trivial subalgebra and $\fd H^* \leq 20$. If there exists a non-zero derivation of negative degree and $|x_{k-1}| + |x_k| \geq 12$, then the degree type is 
	\[(2,4,6,6; 6,8,12,12) \mathrm{~or~} (2,2,6,6; 4,8,12,12).\]
\end{proposition}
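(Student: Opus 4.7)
The strategy is to use the formal dimension identity $\fd H^* = \sum_{i=1}^k (|u_i| - |x_i|)$ from Theorem \ref{thm:PureModel}, together with the Degree Inequality (Lemma \ref{lem:degreeInequality}), to obtain a sharp lower bound on $\fd H^*$ in terms of $k$, $|x_1|$, $|x_{k-1}|$, and $|x_k|$, and then to enumerate the finitely many degree types compatible with $\fd H^* \leq 20$ by direct comparison of upper and lower bounds on the relation degrees.

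I would begin by collecting standard reductions. Since the Halperin conjecture is known for $H^*$ with at most three generators (by Thomas and by Lupton--Chen), we may assume $k \geq 4$. The Land in Zero and $k - 1$ Lemmas together force $\delta$ to be non-zero on at least two generators of degree at least four, so $|x_{k-1}| \geq 4$; moreover $|x_1| \geq 2$ from the pure model hypothesis. Applying the Degree Inequality for $i \leq k - 2$ together with the pure model inequalities $|u_{k-1}| \geq 2|x_{k-1}|$ and $|u_k| \geq 2|x_k|$, the telescoping identity $\sum_{i=1}^{k-2}(|x_1| + |x_{i+1}| - |x_i|) = (k-3)|x_1| + |x_{k-1}|$ yields
\[\fd H^* \;\geq\; (k-3)|x_1| + 2|x_{k-1}| + |x_k|.\]
Combined with the other hypotheses, this inequality pins down $(k, |x_{k-1}|, |x_k|)$ to one of four triples: $(4,4,8)$, $(4,4,10)$, $(4,6,6)$, and $(5,4,8)$. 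For each, I would enumerate the allowed degree types by matching the identity $\sum_j |u_j| = \fd H^* + \sum_j |x_j|$ against the individual lower bounds $|u_j| \geq \max(2|x_j|,\, |x_1| + |x_{j+1}|)$ for $j < k$ and $|u_k| \geq 2|x_k|$, breaking into sub-cases on $|x_1|$ and the middle generator degrees.

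Three of the four cases collapse immediately: for $(5,4,8)$ and $(4,4,10)$ the sum of lower bounds exceeds the target in every sub-case, while for $(4,4,8)$ only the sub-case $|x_1| = 2$, $|x_2| = 2$ survives and forces the unique candidate $(2,2,4,8;\,4,6,10,16)$. This degree type fails the condition $SAC(4,8)$ of Theorem \ref{thm:SAC}, since $16$ is the only element of $\{4,6,10,16\}$ expressible as a non-negative integer combination of $\{4,8\}$ with coefficient sum at least two. In the remaining case $(k,|x_{k-1}|,|x_k|) = (4,6,6)$, $|x_1| = 2$, the sub-case $|x_2| = 6$ again violates the sum bound, while $|x_2| \in \{2, 4\}$ each force equality throughout and produce exactly the two degree types of the conclusion.

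The main obstacle is choosing a strong enough lower bound on $\fd H^*$: the Degree Inequality alone gives only $\fd H^* \geq (k-2)|x_1| + 2|x_k|$, which fails to distinguish, for example, $(|x_{k-1}|,|x_k|) = (8,8)$ from $(6,6)$. Replacing the Degree Inequality on $u_{k-1}$ by the pure model bound $|u_{k-1}| \geq 2|x_{k-1}|$ sharpens this to the displayed inequality above, which is tight enough to enumerate the cases by hand; the single unavoidable appeal to $SAC$ occurs in the $(4,8)$ case, where the arithmetic of the bounds alone would leave a spurious degree type.
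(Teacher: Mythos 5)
Your proposal is correct, and its core is the same as the paper's: bound $\fd H^*$ below by $(k-3)|x_1|+2|x_{k-1}|+|x_k|$ using the formal-dimension formula, the Degree Inequality (Lemma \ref{lem:degreeInequality}) and the pure-model bounds $|u_i|\geq 2|x_i|$, enumerate the surviving degree types, and kill the one stubborn candidate $(2,2,4,8;4,6,10,16)$ with the $SAC(4,8)$ condition --- exactly the paper's endgame in its $|x_k|\geq 8$ case. The one genuine divergence is the disposal of $k\leq 3$: you quote the known three-generator results of Lupton and Chen, whereas the paper treats $k=3$ with its own tools, observing that $\delta x_2$ and $\delta x_3$ are nonzero and linearly independent, so $|x_1|<|x_2|<|x_3|$ with $|x_3|\geq 8$, and then invoking part 2 of Lemma \ref{lem:degreeInequality} (applicable because $\delta x_2=\lambda x_1^\alpha\neq 0$) to get $|u_1|\geq|x_1|+|x_3|$ and hence $\fd H^*>20$. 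Your citation is logically sound, but note it is genuinely needed: the numerical bookkeeping alone does not eliminate $k=3$ (for instance $(2,4,8;6,12,16)$ has formal dimension $20$ and satisfies both the degree bounds and the SAC), so one must either import Lupton--Chen as you do or run the paper's derivation-specific argument; the paper's choice keeps the proof self-contained and elementary, while yours buys brevity at the cost of an external theorem. All other steps check out: the triples $(k,|x_{k-1}|,|x_k|)\in\{(4,4,8),(4,4,10),(4,6,6),(5,4,8)\}$ are exactly what the refined inequality allows, the sub-case arithmetic (including using $|u_{k-1}|\geq|x_1|+|x_k|$ to dispatch $(5,4,8)$) is right, and the two surviving types are forced with equality throughout.
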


\begin{proof}
First suppose that $k = 3$. By the Land in Zero Lemma, $\delta x_1 = 0$. By the $k-1$ Lemma, $\delta x_2$ and $\delta x_3$ are non-zero and, moreover, linearly independent since otherwise we could change basis to ensure $\delta x_2 = 0$. In particular, it follows for degree reasons that $|x_1| < |x_2| < |x_3|$. On one extreme, these degrees could be $2$, $4$, and $6$, but this contradicts the assumption that $|x_{k-1}| + |x_k| \geq 12$. We may assume therefore that $|x_3| \geq 8$. We put this into the formula for the formal dimension in Theorem \ref{thm:PureModel} and we estimate the summands using the Degree Inequality (Lemma \ref{lem:degreeInequality}):
	\[\fd H^*	= \sum_{i=1}^3 \of{|u_i| - |x_i|} \geq |x_3| + \max(|x_1| + |x_3| - |x_2|, |x_2|) + |x_3|.\]
Since the maximum is at least the average, this implies $\fd H^* > 20$, a contradiction.

Next suppose that $k \geq 4$ and $|x_k| \geq 8$. Using the Degree Inequality to estimate $|u_i|$ for $i \leq k-1$ and the estimate $|u_k| \geq 2|x_k|$, we obtain
	\[\fd H^* \geq	\sum_{i=1}^{k-1} \of{|x_1| + |x_{i+1}| - |x_i|} + |x_k| = (k-2)|x_1| + 2|x_k| \geq 20.\]
Hence equality holds everywhere, and we have $k = 4$ and $|x_k| = 8$. Now we repeat this estimate, except we use the bound $|u_i| \geq 2|x_i|$ for the $i = k-1$ term, to get
	\[20 = \fd H^* \geq (k-3)|x_1| + 2|x_{k-1}| + |x_k| \geq 2 + 2|x_{k-1}| + 8.\]
Hence $|x_{k-1}| \leq 4$. By the $k-1$ Lemma, we may assume that $|x_{k-1}| = 4$ and that equality holds in the above estimates. In particular, $|u_1| \leq |u_2| = 6$ and $|u_3| = 10$, so we have a contraction to the $SAC(4,8)$ condition.

Finally, suppose that $k \geq 4$ and $|x_k| \leq 6$. By the assumption in the proposition, we have $|x_{k-1}| = |x_k| = 6$. Estimating as in the previous case, we see that
	\[\fd H^* \geq (k-3)|x_1| + 2|x_{k-1}| + |x_k| \geq 2 + 3(6) = 20.\]
Hence equality holds, and the degree type is of the form
	\[(2,A_2,6,6; 2+A_2, 8, 12, 12)\]
where $A_2 \in \{2,4\}$. These two possibilities correspond to the two degree types in the conclusion of the proposition, so the proof is complete.
\end{proof}

\begin{comment}
To illustrate....suppose that $H^* =  \Q[x_1,\ldots,x_k]/(u_1,\ldots,u_k)$ has formal dimension $\fd H^* \leq 20$, and suppose there exists a non-trivial derivation $\delta$ with negative degree. Assuming the result in smaller formal dimensions, we may assume $H^*$ does not have a non-trivial subalgebra. Hence we can apply Lemma \ref{lem:DegIneq} to estimate the formal dimension as follows:
	\begin{eqnarray}\label{eqn:fd-model-calculation}
	\fd H^*	&=& 		\sum_{i=1}^{k-2} \of{|u_{i}| - |x_{i}|} + \of{|u_{k-1}| - |x_{k-1}|} + \of{|u_{k}| - |x_{k}|}\\
			&\geq&	\sum_{i=1}^{k-2} \of{|x_1| + |x_{i+1}| - |x_i|} + |x_{k-1}| + |x_k|\nonumber\\
			&=&		(k-3)|x_1| + 2|x_{k-1}| + |x_k|\nonumber
	\end{eqnarray}
Now the $k-1$ Lemma implies that at least two of the $x_i$ are not in the kernel of $\delta$, and the Land in Zero Lemma implies therefore that at least two $|x_i| \neq 2$. Since the $|x_i|$ are increasing, we may assume $|x_k| \geq |x_{k-1}| \geq 4$ and conclude
	\[20 \geq \fd H^* \geq 2(k-3) + 2(4) + (4) = 2k + 6.\]
Hence $k \leq 7$, with equality only if the degree type is precisely \[(2,2,2,2,2,4,4; 4,4,4,4,6,8,8).\] 
\end{comment}

To finish the proof, we only need to consider the two exceptional degree types in Proposition \ref{pro:12orLarger-part1}. Note that, for the first time, the possibility that $\delta$ has degree $-4$ is non-trivial. Indeed, in all previous cases, it is immediate to see that $\delta$ having degree $-4$, $-6$,\ldots implies that $\delta$ is zero on at least $k-1$ generators for degree reasons and hence that $\delta = 0$ by the $k-1$ Lemma.

The first case is simpler and uses ideas similar to previous proofs.

\begin{proposition}\label{pro:12orLarger-part2} If $H^* = \Q[x_1,\ldots,x_k]/(u_1,\ldots,u_k)$ is a positively elliptic algebra with no non-trivial subalgebra and degree type
	\[(2,4,6,6; 6,8,12,12),\]
then $H^*$ does not admit a non-zero derivation with negative degree.
\end{proposition}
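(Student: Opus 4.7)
The plan is to rule out the two possible degrees $|\delta|\in\{-2,-4\}$. For $|\delta|=-4$, the Land in Zero Lemma gives $\delta(x_1)=\delta(x_2)=0$, and $\delta(x_3),\delta(x_4)\in H^2=\Q x_1$, so both are scalar multiples of $x_1$. A linear change of basis in $(x_3,x_4)$ then kills one of them, forcing $\delta$ to vanish on three generators, contradicting the $k-1$ Lemma. So we may assume $|\delta|=-2$. Here $\delta(x_1)=0$ (Land in Zero), and $\delta(x_2)=\lambda x_1$ with $\lambda=0$, since otherwise Part 2 of the Degree Inequality would give $|u_1|\geq |x_1|+|x_3|=8$, contradicting $|u_1|=6$. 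The $k-1$ Lemma then forces $\delta(x_3),\delta(x_4)\in H^4=\langle x_1^2,x_2\rangle$ to be linearly independent, so after changing basis in $(x_3,x_4)$ we may assume $\delta(x_3)=x_1^2$ and $\delta(x_4)=x_2$.

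Next, I would exploit the pure-model constraint $u_i\in\Q^{\geq 2}[x_1,\ldots,x_4]$. For $u_1$ of degree $6$, the only degree-$6$ monomials of total algebraic degree $\geq 2$ are $x_1^3$ and $x_1x_2$, so $u_1\in(x_1)$. Writing $u_2=a_1x_1^4+a_2x_1^2x_2+a_3x_2^2+a_4x_1x_3+a_5x_1x_4$, every term of $u_2$ except $a_3x_2^2$ lies in $(x_1)$. For $u_3$, write it as a polynomial in $x_3,x_4$ with coefficients in $\Q[x_1,x_2]$:
\[u_3=P+x_3A+x_4B+Cx_3^2+Dx_3x_4+Ex_4^2,\]
with $P,A,B\in\Q[x_1,x_2]$ and $C,D,E\in\Q$. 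Since $|\delta(u_3)|=10$ and $|u_3|=|u_4|=12>10$, the relation $\delta(u_3)\in(u_1,\ldots,u_4)$ forces $\delta(u_3)=g u_1+h u_2$ with $g=g_1x_1^2+g_2x_2$ and $h=h_1x_1$. Computing $\delta(u_3)$ directly and comparing the coefficients (in $\Q[x_1,x_2]$) of $x_3$ and $x_4$ on both sides yields $2Cx_1^2+Dx_2=h_1a_4x_1^2$ and $Dx_1^2+2Ex_2=h_1a_5x_1^2$, from which $D=E=0$. The same argument applied to $u_4$ gives the analogous $D'=E'=0$.

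Finally I would reduce modulo $x_1$ to derive a contradiction. The images in $\Q[x_2,x_3,x_4]=\Q[x_1,\ldots,x_4]/(x_1)$ become $\bar u_1=0$, $\bar u_2=a_3x_2^2$, and (since $A,B\in\Q[x_1,x_2]$ are of graded degree $6$ with only monomials $x_1^3,x_1x_2$, hence also in $(x_1)$, and $D=E=D'=E'=0$)
\[\bar u_3=e_4 x_2^3+Cx_3^2,\qquad \bar u_4=e_4'x_2^3+C'x_3^2,\]
where $e_4,e_4'$ are the coefficients of $x_2^3$ in $P,P'$. Crucially, $x_4$ does not appear in any $\bar u_i$, so $H^*/(x_1)$ surjects onto $\Q[x_4]$, contradicting $\dim H^*<\infty$. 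The main obstacle is the coefficient-comparison step that forces $D=E=D'=E'=0$; without it, a surviving $x_3x_4$ or $x_4^2$ term in $u_3$ or $u_4$ would leave $x_4$ in the quotient and prevent the contradiction.
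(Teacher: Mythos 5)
Your argument is correct, and while it follows the same overall skeleton as the paper's proof (normalize to $\delta(x_3)=x_1^2$, $\delta(x_4)=x_2$, apply the derivation to the degree-$12$ relations, use low-degree ideal membership, and contradict finite-dimensionality via a surjection onto $\Q[x_4]$), two of your steps are genuinely different. First, to kill $\delta(x_2)$ the paper invokes the Top-to-Bottom Lemma (forcing $\delta(x_3),\delta(x_4)\in\Q x_1^2$ and then contradicting the $k-1$ Lemma), whereas you get it directly from Part 2 of the Degree Inequality, since $|u_1|=6<8=|x_1|+|x_3|$; your route is shorter and avoids the Top-to-Bottom Lemma altogether for this proposition. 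Second, the paper applies $\delta^2$ to $u_j$ ($j=3,4$), lands in degree $8$, uses the no-subalgebra hypothesis to rule out a $u_2$-contribution, and runs a divisibility argument to conclude $x_3\mid p_j$, hence $u_1,\ldots,u_4\in(x_1,x_2,x_3)$; you instead apply $\delta$ once, land in degree $10$, and compare the $x_3$- and $x_4$-coefficients of $\delta(u_j)=gu_1+hu_2$ (using that $u_1\in\Q[x_1,x_2]$ and that the $x_3,x_4$-coefficients of $u_2$ are multiples of $x_1$) to get $D=E=0$, which is in fact slightly stronger than the paper's conclusion, and then reduce mod $x_1$ only. Both endgames produce the same contradiction, a surjection onto $\Q[x_4]$. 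One cosmetic point: you should say explicitly why $|\delta|\in\{-2,-4\}$ are the only degrees to consider, namely that $|\delta|\leq -6$ sends every generator to degree $\leq 0$ and hence, by the Land in Zero Lemma, forces $\delta=0$; also, for $|\delta|=-4$, $\delta(x_1)$ lands in negative degree and vanishes trivially rather than by Land in Zero. Neither affects the validity of the proof.
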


\begin{proof}
Suppose first that $\delta(x_2) = x_1$, after possibly rescaling. Applying the Top-to-Bottom Lemma, we see that $\delta(x_i) = \lambda_i x_1^2$ for some $\lambda_i \in \Q$ for $i \in \{3,4\}$. Replacing $x_i$ by $x_i - \lambda_i x_1 x_2$, we find that $x_1,x_3,x_4 \in \ker(\delta)$ in contradiction to the $k-1$ Lemma. Hence we may assume that
	\[\delta(x_1) = 0 \mathrm{~and~} \delta(x_2) = 0.\]

Furthermore, we may assume that $\delta(x_3)$ and $\delta(x_4)$ are linearly independent elements in degree four. In particular, $\delta$ cannot have degree $-4$ (or smaller), so $\delta$ has degree $-2$. After choosing a suitable basis, we may assume that
	\[\delta(x_3) = x_1^2 \mathrm{~and~} \delta(x_4) = x_2.\]

Write
	\[u_j = p_j(x_3,x_4) + r_j\]
for $j \in \{3,4\}$, where $r_j \in (x_1,x_2)$. Note that $\delta^2(r_j) = 0$ for degree reasons, so 
	\[2 p_j(x_1^2, x_2) = \delta^2(u_j) \in (u_1,\ldots,u_4).\]
This is an equation in degree eight, so we have
	\[2 p_j(x_1^2, x_2) = a x_1 u_1 + b u_2\]
for some $a,b \in \Q$. Note that $b = 0$, since otherwise $u_1$ and $u_2$ are polynomials in $x_1$ and $x_2$, which contradicts the assumption that $H^*$ does not have a non-trivial subalgebra. 

Since $b = 0$, we find that $x_1$ divides $p_j(x_1^2, x_2)$ for $j \in \{3,4\}$. This implies that $x_1^2$ divides $p_j(x_1^2, x_2)$, and hence both $p_3(x_3,x_4)$ and $p_4(x_3,x_4)$ are divisible by $x_3$. It follows that 
	\[u_1,\ldots,u_4 \in (x_1, x_2, x_3),\]
which is a contradiction to the finite-dimensionality of $H^*$.
\end{proof}

Finally, we prove the last remaining case. We wish to highlight that the proof in this case differs from all of the previous arguments. Specifically, we do not choose our basis in order to simplify the action of $\delta$, as this does not appear to help us. Rather we choose our basis in order to simplify the form of the relations. 

\begin{proposition}\label{pro:12orLarger-part3} If $H^* = \Q[x_1,\ldots,x_k]/(u_1,\ldots,u_k)$ is a positively elliptic algebra with no non-trivial subalgebra and degree type
	\[(2,2,6,6; 4,8,12,12),\]
then $H^*$ does not admit a non-zero derivation with negative degree.
\end{proposition}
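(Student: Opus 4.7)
Plan: Assume for contradiction that $\delta$ is a nonzero derivation of negative degree on $H^*$. By the Land in Zero and $k-1$ Lemmas, $\delta(x_1) = \delta(x_2) = 0$ and $\delta(x_3), \delta(x_4)$ are linearly independent in $H^{6+|\delta|}$, so $|\delta| \in \{-2, -4\}$. Record the structure of the relations: $u_1 \in \Q[x_1, x_2]$ is a nonzero quadratic, the degree eight relation is $u_2 = p(x_1, x_2) + x_3 \ell_3 + x_4 \ell_4$ with $(\ell_3, \ell_4) \neq (0, 0)$ by non-splitting, and each $u_j$ for $j \in \{3, 4\}$ decomposes as
\[
u_j = A_j x_3^2 + B_j x_3 x_4 + C_j x_4^2 + x_3 f_j + x_4 g_j + h_j
\]
with $f_j, g_j, h_j \in \Q[x_1, x_2]$. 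Write $\bar u_j(X, Y) = A_j X^2 + B_j XY + C_j Y^2$. Finite dimensionality of $H^*/(x_1, x_2) \cong \Q[x_3, x_4]/(\bar u_3, \bar u_4)$ then forces $\bar u_3, \bar u_4$ to share no nontrivial common zero.

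The central computation exploits $\delta^2$. Since $\delta$ annihilates $\Q[x_1, x_2]$ (by the Leibniz rule from $\delta(x_1) = \delta(x_2) = 0$) and the polynomial representatives $\xi := \delta(x_3), \eta := \delta(x_4)$ lie in $\Q[x_1, x_2]$ for degree reasons, we have $\delta^2(x_i) = 0$ for every generator. A direct calculation using the Leibniz rule then gives
\[
\delta^2(u_j) = 2 \bar u_j(\xi, \eta) \in \Q[x_1, x_2],
\]
the $x_3 f_j + x_4 g_j + h_j$ terms contributing nothing. Since $u_2 \notin \Q[x_1, x_2]$ (by $(\ell_3, \ell_4) \neq (0, 0)$) and higher-degree relations cannot contribute at the relevant degrees, a degree count shows that the intersection of the ideal with $\Q[x_1, x_2]$ is exactly $u_1 \cdot \Q[x_1, x_2]$. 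Consequently $u_1 \mid \bar u_j(\xi, \eta)$ in $\Q[x_1, x_2]$ for $j = 3, 4$.

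For $|\delta| = -4$: after a basis change in $x_3, x_4$ we may assume $\xi = x_1, \eta = x_2$; then $\bar u_j(x_1, x_2)$ is a quadratic form of the same degree as $u_1$, and divisibility forces $\bar u_j(x_1, x_2) = \mu_j u_1$. Hence $(A_3, B_3, C_3)$ and $(A_4, B_4, C_4)$ are both scalar multiples of the coefficient vector of $u_1$, so $\bar u_3, \bar u_4$ are proportional quadratics, contradicting finite dimensionality of $H^*/(x_1, x_2)$. For $|\delta| = -2$: $\xi, \eta$ are degree 2 polynomials in $x_1, x_2$, and the divisibility $u_1 \mid \bar u_j(\xi, \eta)$ combined with the no-common-zero condition on $\bar u_3, \bar u_4$ forces $(\xi, \eta)$ to vanish at each root of $u_1$ over $\overline{\Q}$. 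If $u_1$ has two distinct roots, then $\xi, \eta \in \Q \cdot u_1$, so $\delta(x_3) = \delta(x_4) = 0$ in $H^*$, contradicting the $k-1$ Lemma. If $u_1 = L^2$ for some linear $L$, then $L \mid \xi$ and $L \mid \eta$, so $\xi, \eta$ lie in $L \cdot \mathrm{span}(x_1, x_2)$, whose image in $H^4 = \Q[x_1, x_2]_2/(L^2)$ is one-dimensional; hence $\delta(x_3), \delta(x_4)$ are linearly dependent in $H^4$, again contradicting the $k-1$ Lemma. The main obstacle is the case $|\delta| = -2$, where one must couple the algebraic divisibility with the no-common-zero condition and handle separately the two possible factorization types of $u_1$ over $\overline{\Q}$.
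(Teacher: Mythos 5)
Your proof is correct, and it follows the same skeleton as the paper's: decompose the degree-$12$ relations as a quadratic form in $x_3,x_4$ plus a remainder in $(x_1,x_2)$, note the remainder dies under $\delta^2$, compute $\delta^2(u_j)=2\bar u_j(\delta x_3,\delta x_4)$, and use the low-degree slices of the ideal (where only $u_1$, and in degree $8$ possibly $u_2$, can appear) to conclude that $u_1$ divides both $\bar u_j(\xi,\eta)$; your handling of the $u_2$-coefficient via $(\ell_3,\ell_4)\neq(0,0)$ is the same non-splitting argument the paper runs by a change of basis on $u_2$. Where you genuinely diverge is the endgame in the case $|\delta|=-2$: the paper first normalizes the pair $(\bar u_3,\bar u_4)$ to $(x_3^2,x_4^2)$ or $(x_3^2-\lambda x_4^2,x_3x_4)$ and then splits into three subcases according to the factorization of $u_1$ over $\Q$, whereas you skip the normal form entirely and evaluate at the roots of $u_1$ over $\overline{\Q}$, using the ``no common projective zero'' reformulation of coprimality of $\bar u_3,\bar u_4$ to force $\xi,\eta$ to vanish at every root of $u_1$. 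This treats the irreducible and split square-free cases uniformly and is arguably cleaner. In the remaining case $u_1=\lambda L^2$ you argue via the derivation (the image of $L\cdot\mathrm{span}\{x_1,x_2\}$ in $H^4=\Q[x_1,x_2]_2/\Q L^2$ is one-dimensional, so $\delta x_3,\delta x_4$ are dependent, contradicting the $k-1$ Lemma), which is valid; the paper's observation here is even shorter and independent of $\delta$ -- a perfect-square $u_1$ already yields a non-trivial subalgebra generated by $L$ -- and you could adopt it to shorten that subcase. One small point of care: your statement that the ideal meets $\Q[x_1,x_2]$ exactly in $u_1\cdot\Q[x_1,x_2]$ is only needed (and only justified) in degrees $4$ and $8$, as you indicate with ``at the relevant degrees''; stated for all degrees it would require more argument, so keep the qualifier.
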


\begin{proof}
Suppose $\delta$ is a non-zero derivation of negative degree, and note that $\delta$ has degree $-2$ or $-4$ by the Land in Zero Lemma. For $j \in \{3,4\}$, write
	\[u_j = p_j(x_3, x_4) + r_j\]
where $r_j \in (x_1, x_2)$. Since $r_j$ has degree $12$ and hence at most one $x_3$ or $x_4$ in each of its monomials, $r_j \in \ker(\delta^2)$.

Note that $p_3$ and $p_4$ are coprime polynomials. Indeed, if $g(x_3,x_4)$ were a non-constant common factor, then all relations $u_j$ are in the ideal $I = (x_1, x_2, g(x_3,x_4))$ and $H^*$ projects onto the infinite-dimensional space $\Q[x_1,\ldots,x_4]/I$, a contradiction.

Since $p_3(x_3,x_4)$ and $p_4(x_3,x_4)$ are coprime, quadratic polynomials, we can choose bases of $\mathrm{span}\{x_3, x_4\}$ and $\mathrm{span}\{u_3,u_4\}$ such that one of the following cases occurs:
	\[(p_3,p_4) = (x_3^2, x_4^2) \mathrm{~or~} (p_3,p_4) = (x_3^2 - \lambda x_4^2, x_3x_4) \mathrm{~with~}\lambda\neq 0.\]
Indeed, up to relabeling and scaling, we may assume that $p_3$ contains an $x_3^2$ term. Completing the square and replacing $x_3$ by something of the form $x_3 + \mu x_4$, we find that $p_3 = x_3^2 - \lambda x_4^2$ for some $\lambda \in \Q$. Subtracting a multiple of $u_3$ from $u_4$ corresponds to subtracting the same multiple of $p_3$ from $p_4$. We can do this so that $p_4 = \mu x_3 x_4 + \nu x_4^2$ for some $\mu, \nu \in\Q$. If $\mu = 0$, the claim follows by rescaling $u_4$ and subtracting a multiple of $u_4$ from $u_3$. If $\mu \neq 0$, we may replace $x_3$ by $\mu x_3 + \nu x_4$. This results in $p_4 = x_3 x_4$. Subtracting now a multiple of $u_4$ from $u_3$ and scaling $u_3$ once more, we find that we are in the second case of the claim. Note here that $\lambda \neq 0$ because $p_3$ and $p_4$ are coprime.

Returning to the expressions for $u_j$, we apply $\delta^2$ to get
	\[2 p_j(\delta x_3, \delta x_4) = \delta^2(u_j) \in (u_1,\ldots,u_4).\]

Suppose first that $\delta$ has degree $-4$, so that $\delta(x_j) \in \mathrm{span}\{x_1,x_2\}$ for $j \in \{3,4\}$. Without loss of generality, we may assume $\delta x_3 = x_1$ and $\delta x_4 = x_2$. Since $p_3$ and $p_4$ are coprime polynomials, so are
	\[\delta u_3 = 2 p_3(x_1, x_2) \hspace{.2in} \mathrm{and} \hspace{.2in} \delta u_4 = 2 p_4(x_1, x_2).\]
But $\delta u_3, \delta u_4 \in \mathrm{span}\{u_1\}$, so we have a contradiction.

Suppose instead that $\delta$ has degree $-2$. Since the expressions for $p_j(\delta x_3, \delta x_4)$ are in degree eight, we have equations of the form
	\[2 p_j(\delta x_3, \delta x_4) = l_j(x_1,x_2) u_1 + k_j u_2\]
for $j \in \{3,4\}$, where the $l_j$ are linear polynomials and the $k_j \in \Q$. 

If some $k_j \neq 0$, we may replace $u_2$ by $l_j(x_1, x_2) u_1 + k_j u_2$ and conclude that $u_1$ and $u_2$ are polynomials in $x_1$ and $x_2$. This implies the existence of non-trivial subalgebra, a contradiction.

We may assume that $k_3 = k_4 = 0$, so that $u_1$ divides $p_3(\delta x_3, \delta x_4)$ and $p_4(\delta x_3, \delta x_4)$. Using the simple formulas for $p_3$ and $p_4$, we see that one of the following happens:
	\begin{enumerate}
	\item $u_1$ divides both $(\delta x_3)^2$ and $(\delta x_4)^2$.
	\item $u_1$ divides both $(\delta x_3)^2 - \lambda (\delta x_4)^2$ and $(\delta x_3)(\delta x_4)$ for some $\lambda \in \Q \setminus\{0\}$.
	\end{enumerate}
In either case, if $u_1$ is irreducible, it follows that $u_1$ divides both $\delta x_3$ and $\delta x_4$. Since all of these elements have degree four, we find that $\delta x_3$ and $\delta x_4$ are linearly dependent. After changing basis once more, we find a contradiction to the $k-1$ Lemma.

Next if $u_1 = l_1 l_2$ is a product of coprime irreducibles, then each irreducible factor divides both $\delta x_3$ and $\delta x_4$ by a similar argument. Moreover, since $l_1$ and $l_2$ are coprime, it follows that $u_1$ divides both of these elements, and we again have a contradiction.

Finally, if neither of these cases occurs, then $u_1 = \lambda l^2$ for some $\lambda \in \Q$ and some linear polynomial $l = l(x_1,x_2)$. But now we can replace $x_1$ or $x_2$ by $l(x_1,x_2)$ and derive the existence of a non-trivial subalgebra of $H^*$, so we again have a contradiction.
\end{proof}

%%%%% Bibliography %%%%%
%\bibliographystyle{alpha}
%\bibliography{myrefs}

\end{document}